\newtheorem{lemma}{Lemma}[section]
\newtheorem{prop}[lemma]{Proposition}
\newtheorem{cor}[lemma]{Corollary}
\newtheorem{claim*}{Claim}
\newtheorem{thm}[lemma]{Theorem}
\newtheorem{defn}[lemma]{Definition}
\newtheorem{example}[lemma]{Example}
\newtheorem{question}[lemma]{Question}
\theoremstyle{remark}
\newtheorem{remark}[lemma]{Remark}
\newtheorem{rem}[lemma]{Remark}
\newcommand{\PP}{\mathbb P}
\renewcommand{\P}{\PP}
\newcommand{\bA}{\mathbb A}
\newcommand{\A}{\bA}
\newcommand{\initial}{\operatorname{in}}
\newcommand{\Hilb}{\operatorname{Hilb}}
\newcommand{\Spec}{\operatorname{Spec}}
\newcommand{\Gr}{\operatorname{Gr}}
\newcommand{\cO}{{\mathcal O}}
\newcommand{\kk}{{\bf k}}
\newcommand{\FF}{\mathbb{F}}
\newcommand{\F}{\FF}
\newcommand{\GL}{{GL}}
\newcommand{\R}{\mathbb{R}}
\newcommand{\C}{\mathbb{C}}
\newcommand{\defi}[1]{\textsf{#1}} 
\newcommand{\beq}{\begin{displaymath}}
\newcommand{\eeq}{\end{displaymath}}
\newcommand{\bs}{\backslash}
\title{Furstenberg sets and Furstenberg schemes over finite fields}
\author{Jordan S. Ellenberg}
\address{Department of Mathematics, University of Wisconsin, Madison, WI 53706}
\email{ellenber@math.wisc.edu}
\urladdr{http://www.math.wisc.edu/~ellenber/}
\author{Daniel Erman}
\address{Department of Mathematics, University of Wisconsin, Madison, WI 53706}
\email{derman@math.wisc.edu}
\urladdr{http://www.math.wisc.edu/~derman/}
\thanks{ The first author is supported by NSF Grant DMS-1402620 and the second author was supported by NSF Grant DMS-1302057.}
\date{\today}
\begin{document}
\maketitle

\begin{abstract}
We give a lower bound for the size of a subset of $\F_q^n$ containing a rich $k$-plane in every direction, a {\em k-plane Furstenberg set}.
The chief novelty of our method is that we use arguments on non-reduced subschemes and flat families to derive combinatorial facts about incidences between points and $k$-planes in space.
\end{abstract}

\section{Introduction}

A central question in harmonic analysis is the {\em Kakeya conjecture}, which holds that a subset $S$ of $\R^n$ containing a unit line segment in every direction has Hausdorff dimension $n$.  Many refinements and generalizations of the Kakeya conjecture have appeared over the years.  For instance, one may loosen the condition on $S$, asking only that there be a line segment in every direction whose intersection with $S$ is large in the sense of Hausdorff dimension.

\begin{question}[Furstenberg set problem] Let $S$ be a compact subset of $\R^n$ such that, for every line $\ell \subset \R$, there is a line parallel to $\ell$ whose intersection with $S$ has Hausdorff dimension at least $c$.  What can be said about the Hausdorff dimension of $S$?
\end{question}

This problem was introduced by Wolff~\cite[Remark 1.5]{wolff99}, based on ideas of Furstenberg.  Wolff showed that $\dim S > \max(c+1/2, c n)$, and gave examples of $S$ with $\dim S = (3/2)c + (1/2)$.  

More generally, we can ask the same question about $k$-planes:

\begin{question}[k-plane Furstenberg set problem] Let $S$ be a compact subset of $\R^n$ such that, for every $k$-plane $W \subset \R^n$, there is a $k$-plane parallel to $W$ whose intersection with $S$ has Hausdorff dimension at least $c$.  What can be said about the Hausdorff dimension of $S$?
\end{question}

In this paper, we consider discrete and finite-field analogues of the $k$-plane Furstenberg set problem.

\begin{question}[k-plane Furstenberg set problem over finite fields] Let $\F_q$ be a finite field, and let $S$ be a subset of $\F_q^n$ such that, for every $k$-plane $W \subset \F_q^n$, there is a $k$-plane parallel to $W$ whose intersection with $S$ has cardinality at least $q^c$.  What can be said about $|S|$?
\label{q:kpFff}
\end{question}

We begin by recalling some known results about Question~\ref{q:kpFff} from the case $k=1$.  The method of Dvir's proof of the finite field Kakeya conjecture~\cite{dvir:kakeya} shows immediately that
\begin{equation}
\label{upper1high}
|S| \gtrsim q^{cn}
\end{equation}
and an elementary combinatorial argument shows that
\begin{equation}
\label{upper1low}
|S| \gtrsim q^{c + (n-1)/2}.
\end{equation}
(Here we write $|S|\gtrsim f(q,n,k,c)$ to mean that $|S| > Cf(q,n,k,c)$ for a constant $C$ which may depend on $n,k$  but which is independent of $q$.

In the other direction, Ruixiang Zhang has produced examples~\cite[Theorem 2.8]{zhang} showing that it is possible to have 
\beq
|S| \lesssim q^{(n+1)(c/2) + (n-1)/2}
\eeq
He conjectures that this upper bound is in fact sharp when $q$ is {\em prime}.  It is not sharp in general:  an example of Wolff~\cite[Remark 2.1]{wolff99} shows that when $q=p^2$ and $c = 1/2$ it is possible to have
\beq
|S| \lesssim q^{n/2}
\eeq
In particular, when $q=p^2$ both lower bounds \eqref{upper1high} and \eqref{upper1low} are sharp at the critical exponent $c=1/2$.

Much less is known about higher $k$.  In \cite[Conjecture 4.13]{eot}, the first author, with Oberlin and Tao, proposed a k-plane maximal operator estimate in finite fields.  When $k=1$, we prove the estimate~\cite[Theorem 2.1]{eot}, which bounds the Kakeya maximal operator and generalizes Dvir's theorem.  For general $k$ it remains a conjecture.  Its truth would imply that, for $S$ satisfying the hypothesis in Question~\ref{q:kpFff},
\beq
|S| \gtrsim q^{cn/k}.
\eeq
The main goal of the present paper is to show that this proposed lower bound for the $k$-plane Furstenberg problem is in fact correct.

\begin{prop} Let $S$ be a subset of $\F_q^n$.  Let $c \in [0,k]$.  Suppose that, for each $k$-plane $W \subset \F_q^n$, there is a $k$-plane $V$ parallel to $W$ with $|S\cap V| \geq q^c$.  Then
\beq
|S| > C q^{cn/k}
\eeq
for some constant $C$ depending only on $n$ and $k$.
\label{pr:kplane}
\end{prop}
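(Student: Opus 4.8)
The plan is to push Dvir's polynomial method through, using flat degenerations — and the non-reduced schemes they produce — to compensate for having no control over how $S$ sits inside each rich $k$-plane. Assume for contradiction that $|S|\le Cq^{cn/k}$, with $C$ small. Choose $d$ minimal with $\binom{n+d}{n}>|S|$; then $\binom{n+d-1}{n}\le|S|$ forces $d\le Aq^{c/k}$ with $A=(n!\,C)^{1/n}$, and for $C$ small one gets $\binom{k+d}{k}<q^c$ and (since $c<k$) $d<q$. (The boundary case $c=k$, where the hypothesis yields a full $k$-plane, hence a full line, in every direction, is Dvir's theorem; the finitely many small $q$ are trivial because $S\neq\emptyset$.) Pick a nonzero $f\in\F_q[x_1,\dots,x_n]$ of degree $d_0\le d$ vanishing on $S$, with nonzero top-degree form $g$, $\deg g=d_0<q$. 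It suffices to show that $g$ vanishes on every $k$-dimensional subspace $U\subseteq\F_q^n$: these subspaces cover $\F_q^n$, and a nonzero polynomial of degree $<q$ cannot vanish on all of $\F_q^n$, so we would get $g=0$, the contradiction we want, and hence $|S|>Cq^{cn/k}$.

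Fix $U$ and a $k$-plane $V$ parallel to $U$ with $|S\cap V|\ge q^c$. Identifying $V\cong\A^k$, the restriction $h:=f|_V$ has degree $\le d_0$, vanishes on at least $q^c>\binom{k+d_0}{k}$ points, and has degree-$d_0$ homogeneous part $g|_U$; we must show $g|_U=0$, i.e. that $\deg h<d_0$. For a single $V$ this can fail outright — the rich points might lie on a union of $d_0$ lines — so instead we degenerate. Over the one-parameter group of homotheties of $V$, the point set $S\cap V$ has a flat limit $\mathfrak{Z}_U$: a subscheme of the $(k-1)$-plane at infinity $\P(U)\subseteq\P^{n-1}$, of length $\ge q^c$, recording the directions of the rich points, and typically \emph{non-reduced}. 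Transporting $h$ along the same homotheties and passing to the limit — using that ``a form vanishes on a subscheme'' is a closed condition — shows that if $g|_U\ne0$ then $g|_U$ must vanish on $\mathfrak{Z}_U$.

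The remaining task, which is the real content, is to show that no nonzero form of degree $<q^{c/k}$ can vanish on $\mathfrak{Z}_U$ for every direction $U$ at once, and the main obstacle is excluding the degenerate scenario in which the rich configurations conspire to lie on lower-dimensional flats. I would isolate this as a Furstenberg statement for schemes — a family attaching to each $(k-1)$-plane direction a subscheme of that direction of length $\ge q^c$, of the kind produced above from an honest point set, is not contained in a single hypersurface of degree $<q^{c/k}$ — and prove it by induction on $k$. In the inductive step one degenerates $\mathfrak{Z}_U$ further inside $\P(U)$ and bounds, via a point count on the hypersurface $\{g|_U=0\}$, when a low-degree form can contain the (possibly fat) limit; the only way to dodge a contradiction is for $\mathfrak{Z}_U$ to be concentrated on a proper flat, and then $S$ itself is a $(k-1)$-plane Furstenberg set with parameter essentially $c$, so the inductive hypothesis — with base case $k=1$ the estimate \eqref{upper1high} from Dvir's method — closes the argument. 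Granting this, $g$ vanishes on every $U$, hence $g=0$, and therefore $|S|>Cq^{cn/k}$.
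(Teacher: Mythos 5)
Your approach is genuinely different from the paper's. The paper degenerates $S$ by dilation to a homogeneous subscheme at the origin and then studies a \emph{moduli scheme} $X_{m,k}^S \subseteq \Gr(k,n)$ whose points are the $m$-rich $k$-plane directions. Everything hinges on a dichotomy: either $X_{m,k}^S$ is all of $\Gr(k,n)$, in which case a further Gr\"obner degeneration to a Borel-fixed monomial scheme plus a combinatorial lemma on Borel-fixed sets of monomials gives $|S|\gtrsim m^{n/k}$ (Proposition~\ref{prop:Xm equals Gr}); or $X_{m,k}^S$ is proper, in which case the determinantal defining equations together with the Ein--Lazarsfeld--Smith/Hochster--Huneke symbolic-power comparison force $X_{2m,k}^S$ into a hypersurface of degree $\lesssim |S|/m$, which cannot contain all $\F_q$-rational directions unless $|S|$ is large. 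General $k$ reduces to $k=n-1$ by a descending induction through $(k+1)$-planes. Your route is Dvir-style: take a low-degree $f$ vanishing on $S$, and show its top form $g$ restricts to zero on every $k$-dimensional direction $U$, hence $g=0$. The flat-limit step — sliding $S\cap V$ out to infinity so that $g|_U$ is forced to vanish on a length-$\ge q^c$ scheme $\mathfrak{Z}_U\subset\P(U)\cong\P^{k-1}$ — is correct and is the same dilation idea the paper uses, applied one plane at a time.

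The gap is in your last paragraph. For $k=1$ this is Dvir, and for $k=2$ the argument closes directly since a nonzero binary form of degree $d_0<q^{c/k}\le q^c$ cannot vanish on a length-$\ge q^c$ subscheme of $\P^1$. But for $k\ge 3$ a form of degree $d_0$ on $\P^{k-1}$ can certainly vanish on a $0$-dimensional scheme of length far exceeding $d_0$, so you need the claimed ``Furstenberg statement for schemes,'' and the sketch of its inductive proof does not hold up. The asserted dichotomy — that the only way to avoid a contradiction is for $\mathfrak{Z}_U$ to be concentrated on a proper linear flat — is false: a length-$q^c$ subscheme of $\P^{k-1}$ can sit on a degree-$d_0$ hypersurface without lying on any flat (for instance, on a smooth conic in $\P^2$ when $k=3$). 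Moreover, even granting that each $\mathfrak{Z}_U$ lies on a hyperplane of $\P(U)$, no argument is given for why this would make $S$ a $(k-1)$-plane Furstenberg set — the $\mathfrak{Z}_U$ are limit schemes, and their degeneracy does not obviously propagate back to a statement about rich $(k-1)$-planes meeting $S$. You correctly flag this as ``the real content,'' but as written it is a plan rather than a proof, and the plan has a flaw. The paper sidesteps precisely this difficulty by never analyzing individual $\mathfrak{Z}_U$: the Borel-fixed argument handles the extreme case where every direction over $\overline{\F}_q$ is rich, and the symbolic-power degree bound handles the rest by comparing against the $\F_q$-point count of the Grassmannian.
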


We note that the condition $c \in [0,k]$ in the statement is superfluous, since a $k$-plane has at most $q^k$ points in all.  We include it in order to emphasize the analogy with Theorem\ref{th:main}.   See Remark~\ref{re:0k} for more discussion of this point..

In order to prove Proposition~\ref{pr:kplane}, we introduce an algebraic technique which is familiar in algebraic geometry but novel in the present context; that of {\em degeneration}.  

 A subset of $\F_q^n$ can be thought of as a reduced $0$-dimensional subscheme of the affine space $\A^n/\F_q$. Once this outlook has been adopted, it is natural to pose the Furstenberg set problem in a more general context, addressing all 0-dimensional subschemes, not only the reduced ones.
 
 Denote by $R$ the polynomial ring $\F_q[x_1, \dots , x_n]$. A $0$-dimensional subscheme $S$ of $\A^n$ is defined by an ideal $I \subset R$ such that $R/I$ is a finite-dimensional vector space over $\F_q$. The scheme $S$ is the affine scheme $\Spec R/I$. When $R/I$ is isomorphic to a direct sum of fields, $S$ is reduced and can be thought of as a set of points, and $I$ is the ideal of polynomials which vanish on the set of points.  
 
By contrast, a typical non-reduced example is the ``fat point''$S$ defined by $I = (x_1,\dots,x_n)^d$.\footnote{We think of $S$ as a copy of the origin which has been ``thickened'' infinitesimally; to evaluate a function at $S$ is to specify its values and all its partial derivatives of degree at most $d-1$.  In particular, to say a polynomial $f$ vanishes at $S$ is to say its partials of degree at most $d-1$ all vanish at the origin $0$, which is exactly to say it belongs to the ideal $I$.}  We denote $\dim_{\F_q} R/I$ by $|S|$; when $S$ is reduced (i.e. a set of points) then $|S|$ is the cardinality of the set of geometric points of $S$, just as the notation suggests. When $S$ is the fat point defined by $I = (x_1,\dots,x_n)^{d+1}$ then one can check that $|S|$ is $\binom{n}{d}$.

Our main theorem is that the lower bound on the size of a Furstenberg set asserted in Proposition~\ref{pr:kplane} applies word for word to Furstenberg schemes.

\begin{thm}\label{th:main}
Let $S$ be a $0$-dimensional subscheme of $\A^n/\F_q$.  Let $c \in [0,k]$.  Suppose that, for each $k$-plane $W \subset \A^n$ defined over $\F_q$, there is a $k$-plane $V$ parallel to $W$ with $|S \cap V| \geq q^c$.  Then 
\beq
|S| > C q^{cn/k}
\eeq
for some constant $C$ depending only on $n$ and $k$.
\end{thm}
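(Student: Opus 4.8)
The plan is to prove Theorem~\ref{th:main} by reducing it to Proposition~\ref{pr:kplane} via a degeneration argument. The key point is that the quantity $|S| = \dim_{\F_q} R/I$ is constant in flat families, while the ``Furstenberg property'' (having a rich $k$-plane in every direction) should degenerate well — in the sense that if a special fiber has a rich $k$-plane, one can lift to find rich $k$-planes in nearby fibers, or conversely if generic fibers have rich $k$-planes one can specialize. Concretely, I would set up a one-parameter flat family $\{S_t\}$ over $\A^1$ (or $\Spec$ of a DVR, such as $\F_q[t]_{(t)}$ or a Gröbner-type degeneration using a weight vector) whose generic fiber is a reduced set of $\F_q$-points, so that Proposition~\ref{pr:kplane} applies there, and whose special fiber is a scheme in the same Hilbert scheme component as our given $S$. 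Since $|S_t|$ is locally constant (indeed constant) for a flat family of $0$-dimensional schemes, it suffices to produce, for any $0$-dimensional scheme $S$ with the Furstenberg property, a flat degeneration connecting it to a reduced scheme that still has the Furstenberg property (perhaps with a slightly weaker constant $c' = c - \epsilon$, which is harmless since the constant $C$ is allowed to depend on $n,k$ and we can absorb bounded losses — though one must be careful that $c$ does not itself depend on $q$; in fact one should argue the property transfers with the same $c$).

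The heart of the matter is the behavior of the intersection $|S \cap V|$ under degeneration. Here $V$ ranges over $k$-planes, and the family of all $k$-planes parallel to a fixed $W$ is itself parametrized by an affine space $\A^{n-k}/\F_q$. I would consider the incidence scheme $\{(s, V) : s \in S \cap V\}$ fibered over the parameter space of parallel $k$-planes, and use semicontinuity of fiber dimension / length. The subtlety is that $|S \cap V|$ can jump up under specialization: a fat point can meet a $k$-plane in a scheme of large length even though a nearby reduced point meets it in length $0$ or $1$. So the safe direction is: \emph{deform $S$ to a reduced scheme while not losing richness}. One natural approach is to spread out the fat structure — replace a length-$\ell$ fat point supported at $p$ by $\ell$ distinct reduced points clustered near $p$ — but this risks destroying richness because a $k$-plane through the fat point in a ``tangent'' direction might see length $\ell$ there, while after smoothing those $\ell$ points are spread across many parallel $k$-planes. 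The resolution, and I expect this to be the main obstacle, is to choose the degeneration (equivalently, the weight vector / direction of the family) \emph{adapted to the $k$-plane structure}: one works one direction $W$ at a time, or uses a generic linear change of coordinates so that the rich $k$-planes sit in ``general position'' relative to the scheme structure, ensuring that the length $|S \cap V|$ is preserved in the limit for at least one $V$ in each parallel family.

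Thus the steps, in order, are: (1) recall that for a flat family of $0$-dimensional subschemes of $\A^n$ over a connected base, $\dim_{\F_q} R/I$ is constant, so $|S|$ is a deformation invariant; (2) show that any $0$-dimensional scheme $S$ admits a flat degeneration (within $\A^n$, over $\Spec \F_q[t]_{(t)}$ or over $\A^1$) to a reduced scheme $S_0$ which is a set of $\F_q$-points — e.g.\ by a Gröbner degeneration to a monomial scheme and then a further deformation, or more directly by an explicit smoothing — and control how the $k$-plane intersection numbers behave along the family; (3) verify that $S_0$ inherits the Furstenberg hypothesis, i.e.\ for every $k$-plane direction $W$ there is a parallel $V_0$ with $|S_0 \cap V_0| \ge q^c$ (this is where upper semicontinuity of length must be leveraged in the right direction, possibly after a generic coordinate change to avoid the pathological tangency configurations described above); (4) apply Proposition~\ref{pr:kplane} to the reduced set $S_0$ to conclude $|S_0| > C q^{cn/k}$; (5) conclude $|S| = |S_0| > C q^{cn/k}$. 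I expect step (3) — arranging that richness survives the degeneration, via a careful choice of degeneration compatible with the linear-subspace geometry — to be the crux, and the rest to be comparatively formal.
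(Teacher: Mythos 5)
Your plan is circular and, independently of that, runs into a semicontinuity problem in the wrong direction. The circularity: Proposition~\ref{pr:kplane} is not given an independent proof in the paper or anywhere else you can appeal to --- it is exactly the reduced special case of Theorem~\ref{th:main}, and the paper derives it \emph{from} Theorem~\ref{th:main}, not the other way around. So reducing Theorem~\ref{th:main} to Proposition~\ref{pr:kplane} just pushes the whole burden onto a statement you have no way to establish first. You would need to supply a genuinely different proof of Proposition~\ref{pr:kplane}; the only one in this paper goes through non-reduced schemes.

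The semicontinuity problem is more fundamental. In a flat family of $0$-dimensional subschemes, intersection lengths $|S_t \cap V|$ are \emph{upper} semicontinuous: they can jump up at special fibers, and only down at generic ones (this is Lemma~\ref{capdilate} in the paper, via Proposition~\ref{prop:hilb poly semicontinuity}). So if you build a one-parameter family with your given non-reduced $S$ as the \emph{special} fiber and a reduced scheme $S_0$ as the \emph{generic} fiber, richness is guaranteed to survive only in the wrong direction: the special fiber (your $S$) is at least as rich as the generic one, but the generic fiber may be much less rich than $S$. Your step (3) is not a technical subtlety to be finessed by generic coordinate changes --- it is false in general. Remark~\ref{re:0k} in the paper gives the obstruction explicitly: for $S$ cut out by $(x,y^N)$ with $N > q$, every $\F_q$-line is $N$-rich, yet no reduced subscheme of $\A^2(\F_q)$ can have a line of intersection length exceeding $q$, so no reduced $S_0$ can inherit the hypothesis. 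The correct move, and the one the paper makes, is to degenerate in the opposite direction: dilate $S$ to a $\mathbb{G}_m$-invariant scheme $S_0$ supported at the origin (a \emph{more} degenerate, not less degenerate, scheme), where semicontinuity works for you; then perform a further Gr\"obner degeneration to a Borel-fixed monomial scheme, where the richness-in-every-direction hypothesis becomes a concrete combinatorial statement about standard monomials (Lemma~\ref{lem:frontier}, Proposition~\ref{prop:Xm equals Gr}). The step from ``$S_0$ is rich in every $\F_q$-rational direction'' to ``$S_0$ is rich in \emph{every} direction over $\overline{\F_q}$'' --- which is what the Borel-fixed argument needs --- is handled by showing the locus of non-rich directions $X_{m,k}^{S_0}$ is cut out by minors of bounded degree, so if it were proper it would lie in a hypersurface of degree $o(q)$, which cannot contain all $\F_q$-points of the Grassmannian. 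None of this appears in your outline, and its absence is not a loose end but the entire content of the theorem.
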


\begin{rem}\label{re:0k}
The condition $c \in [0,k]$ is superfluous in Proposition~\ref{pr:kplane}, but not in Theorem~\ref{th:main}.  The subscheme of $\A^2$ cut out by the ideal $(x,y^N)$, for instance, intersects the line $x=0$ in degree $N$ and every other line in degree $1$.  Note that $N$ can be much larger than $q$; once we leave the world of reduced schemes, there is no a priori upper bound for the intersection of $S$ with a line!  In particular, the union of $q+1$ rotations of this scheme has $|S|$ on order $Nq$ and has $|S \cap V| \geq N$ for every $\F_q$-rational line $V \in \A^2$.  If $N = q^c$ and we allowed $c>1$, we would have $|S| \sim q^{c+1} \leq q^{2c}$, violating the theorem statement.
\end{rem}

Why is Theorem~\ref{th:main} easier to prove than its special case Proposition~\ref{pr:kplane}?  The answer involves certain parameter spaces for Furstenberg set problems (constructed in Section~\ref{sec:xm}) that allow us to vary the collection of points $S$.  The degenerate $0$-dimensional schemes form the boundary of this parameter space, and we can bound various functions for all $S$ by bounding them for these degenerate schemes.  Then as happens very often in algebraic geometry, after overcoming an initial resistance to degenerating to a non-smooth situation, we discover that the degenerate situation is actually easier than the original one.

Because our arguments are geometric in nature, they apply over general fields, not only finite fields.  The $k$-planes through the origin in $\A^n$ -- which we may think of as the set of possible directions -- is parametrized by the Grassmannian $\Gr(k,n)$.  Given $m,k,$ and $S$,  we let $\Sigma_{m,k}^S\subseteq \Gr(k,n)(\kk)$ denote the set of directions $\omega$ such that there is some $k$-plane $V$ in direction $\omega$ with $|S\cap V|\geq m$.  Our key technical idea is to observe that this set is more naturally thought of as the set of points on a scheme $X_{m,k}^S$, cut out by polynomial equations on the Grassmanian, and to closely study the properties of those defining equations.  This leads to the following more flexible theorem, from which Theorem~\ref{th:main} will follow without much trouble.

\begin{thm}\label{thm: new main}
Let $\kk$ be an arbitrary field and let $S$ be a $0$-dimensional subscheme of $\A^n/\kk$.  Let $X^S_{m,k}\subseteq \Gr(k,n)$ be the moduli space of directions of $m$-rich $k$-planes for $S$.    Then either:
\begin{enumerate}
	\item $X^S_{m,k}=\Gr(k,n)$ (that is, every $k$-plane direction is $m$-rich) and $|S|$ is at least $C_1 m^{n/k}$, for a constant $C_1$ depending only on $n$ and $k$; or

	\item  $\Sigma^S_{2m,k}$ is contained in a hypersurface $Z\subseteq \Gr(k,n)$ of degree at most $C_2\frac{|S|}{m}$, for a constant $C_2$ depending only on $n$ and $k$.
\end{enumerate}
\end{thm}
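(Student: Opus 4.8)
The plan is to exhibit $X^S_{m,k}$ as an explicit determinantal subscheme of the Grassmannian and then split according to whether its defining data vanishes identically on $\Gr(k,n)$. Give $\Gr=\Gr(k,n)$ its Plücker embedding, let $\mathbb A\Gr$ be the bundle of \emph{affine} $k$-planes over $\Gr$ (the affine bundle attached to the universal quotient bundle $\mathcal Q$), and let $\mathcal V\subseteq\mathbb A\Gr\times\A^n$ be the tautological affine $k$-plane. Put $\mathcal Z:=\mathcal V\cap(\mathbb A\Gr\times S)$. Sending a pair (affine $k$-plane $V$, point $s\in S\cap V$) to (the direction of $V$, the point $s$) identifies $\mathcal Z\cong\Gr\times S$; in particular $\mathcal Z$ is finite over $\Gr$ of degree $|S|$ and finite over $\mathbb A\Gr$. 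Let $\mathcal F$ be the push-forward of $\Osh_{\mathcal Z}$ to $\mathbb A\Gr$. Since $\mathcal Z\to\mathbb A\Gr$ is finite, the fibre length $|S\cap V|$ equals $\dim_{\kappa(V)}\mathcal F\otimes\kappa(V)$, so $\{V:|S\cap V|\ge m\}=V(\mathrm{Fitt}_{m-1}\mathcal F)$; pulling this back through $\mathcal Z\cong\Gr\times S$ gives a closed subscheme $\mathcal Z_{\ge m}\subseteq\Gr\times S$, and since $\Gr\times S\to\Gr$ is finite its image $X^S_{m,k}$ is a genuine closed subscheme of $\Gr$. On a chart of $\Gr$ trivialising $\mathcal Q$, with fibre coordinates $y_1,\dots,y_{n-k}$, the sheaf $\mathcal F$ is presented by the $N\times N(n-k)$ matrix $[\,y_1I-\Phi_1\mid\cdots\mid y_{n-k}I-\Phi_{n-k}\,]$, where $N=|S|$ and the $\Phi_i$ are the commuting operators by which the $y_i$ act on $\Osh_S$; thus $X^S_{m,k}$ and $\mathcal Z_{\ge m}$ are cut out by minors of bounded size of this matrix, of bounded degree on $\Gr$.

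\emph{The first alternative.} If $X^S_{m,k}=\Gr$, apply Dvir's polynomial method. If $|S|\le C_1m^{n/k}$ with $C_1$ small then, since $\dim_\kk R_{\le d}=\binom{n+d}{n}$, there is a nonzero $f\in I_S$ of degree $d\lesssim|S|^{1/n}$, and the choice of $C_1$ keeps $d$ below the threshold $\sim m^{1/k}$ past which a polynomial of degree $\le d$ on $\A^k$ vanishing on a length-$\ge m$ subscheme must vanish identically; hence $f$ vanishes on an $m$-rich $k$-plane in every direction. Its leading form $f_d$ then vanishes on the linear $k$-plane $W_\omega$ for every $\omega$, hence on $\bigcup_\omega W_\omega=\A^n$, so $f_d\equiv0$, a contradiction. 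This gives alternative (1).

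\emph{The second alternative.} Otherwise $X^S_{m,k}\neq\Gr$, so $X^S_{2m,k}\subseteq X^S_{m,k}$ is a proper closed subscheme; since a proper subscheme of projective space of degree $e$ lies on a hypersurface of degree $\le e$, it suffices to bound $\deg X^S_{2m,k}\le C_2|S|/m$. Over each $\omega\in X^S_{2m,k}$ the fibre of $\mathcal Z_{\ge2m}\to\Gr$ is the disjoint union of the $S\cap V$ over the $2m$-rich $k$-planes $V$ of direction $\omega$, so it has length $\ge2m$; hence $\deg X^S_{2m,k}\le\tfrac1{2m}\deg\mathcal Z_{\ge2m}$, and everything reduces to the estimate $\deg\mathcal Z_{\ge2m}\lesssim|S|$. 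Writing $\mathcal Z_{\ge2m}$ as the union over $p\in S$ of the loci $\{\omega:|S\cap(p+W_\omega)|\ge2m\}$, this becomes the statement that these loci have degree $O(1)$ on average over $p$; the danger is the points $p$ where $S$ is heavily concentrated, but a length-$\ell$ fat point meets every $k$-plane through it in length $\approx\ell^{k/n}$, so $\ell\ge m^{n/k}$ would already force us into alternative (1).

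The hard part is the estimate $\deg\mathcal Z_{\ge2m}\lesssim|S|$. The naive bounds coming from the sizes and degrees of the defining minors only give $\lesssim|S|^2$, and extracting the linear bound seems to require genuinely using that any single direction carries at most $|S|/2m$ $2m$-rich $k$-planes — most plausibly through an induction on $n$ that reduces the concentration of $S$ near a point $p$ to a lower-dimensional Furstenberg-scheme problem inside a $k$-plane through $p$, or through a Schubert-calculus computation of the relevant Chern classes on $\Gr$. The remaining ingredients (the scheme structure on $X^S_{m,k}$, the Fitting-ideal description, and the polynomial-method step) I expect to be routine once the degree bound is in hand.
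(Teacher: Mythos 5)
Your construction of $X^S_{m,k}$ via Fitting ideals of the Radon pushforward is essentially the paper's determinantal scheme (the minors of $\overline{\Phi}$), so that part is fine; the trouble is that both halves of the dichotomy are unfinished, and one of them rests on a false step. In alternative (1) you claim that once $\binom{d+k}{k}<m$, a degree-$\le d$ polynomial on $\A^k$ that vanishes on a length-$\ge m$ subscheme must be identically zero. This fails for $k\ge 2$: the linear form $y_1$ lies in the ideal $(y_1,y_2^m)\subset\kk[y_1,y_2]$, whose quotient has length $m$, even though $\binom{3}{2}=3<m$; and even for a reduced $S\cap V$ a linear form can vanish on $m$ collinear points. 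This is exactly the pathology the paper points out in Remark~\ref{re:0k}, and it is precisely why Dvir's polynomial method does not transfer directly to schemes or to $k$-planes with $k\ge 2$. The paper instead proves Proposition~\ref{prop:Xm equals Gr} by a genuinely different route: Gr\"obner degeneration of $S$ to a Borel-fixed monomial scheme (the hypothesis $X^S_{m,k}=\Gr(k,n)$ is transported along the flat family by the semicontinuity statement Corollary~\ref{co:grclosed}), followed by the combinatorial estimates of Lemma~\ref{lem:frontier} and Corollary~\ref{co:frontier} on Borel-fixed monomial sets. Some such degeneration is needed; the Nullstellensatz dimension count alone cannot close this case.

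For alternative (2) you correctly observe that the naive bound from minor sizes and degrees gives only $\lesssim|S|^2$, and you leave the needed $\lesssim|S|$ bound unproved. The paper does not bound $\deg\mathcal Z_{\ge 2m}$ at all. Instead it shows (Lemma~\ref{lem:local Xm structure}, Corollary~\ref{cor:local Xm symbolic}) that the determinantal ideal $J_{X_m}$, whose generators have degree $O(|S|-m+1)$ by Lemma~\ref{lem:degree defining equations}, is contained in the $(m+1)$-st \emph{symbolic} power $I_{X_{2m}}^{(m+1)}$ of the radical ideal of $X^S_{2m,k}$, and then invokes the Hochster--Huneke (equivalently Ein--Lazarsfeld--Smith) comparison of symbolic and ordinary powers to force $I_{X_{2m}}$ to contain an element of degree $\lesssim|S|/m$. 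Extracting a low-degree hypersurface from membership in a high symbolic power is the central new idea of the proof, and it is the ingredient missing from your outline.
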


The connection between Theorem~\ref{thm: new main} and Theorem~\ref{th:main} involves a descending induction argument to reduce to the case $k=n-1$, combined with a simple observation about $\F_q$-points.  Working over $\F_q$, let $k=n-1$ and $m=q^c$ and assume that $|S| = o(q^{cn/(n-1)})$.  Then Theorem~\ref{thm: new main}(2) implies that $X^S_{m,n-1}$ lies in a hypersurface of degree $o(q)$.  However, this would contradict the hypotheses of Theorem~\ref{th:main}, as no hypersurface of degree less than $q$ can contain every $\F_q$-point of $\Gr(n-1,n)$.

\begin{rem} The bounds in Theorem~\ref{th:main} are sharp for every $c$; take $S$ to be the fat point of degree $q^c$ supported at the origin, so that the intersection of $S$ with any $k$-plane is on order $q^{ck}$ and $|S|$ is on order $q^{cn}$.  The bounds in Proposition~\ref{pr:kplane}, however, are {\em not} sharp, or at least are not sharp over the whole range $c \in [0,k]$.  Already when $k=1$, we see that the bound $|S| \gtrsim q^{cn}$ fails to be sharp only when $c < 1/2$ (and when $q$ is prime, it fails to be sharp for $c<1$, by a result of Zhang~\cite[Theorem 1.4]{zhang}.)  The results of the present paper suggest that purely algebraic arguments apply to $0$-dimensional schemes over arbitrary fields and are effective at controlling $k$-planes which are very rich in incidences, while more combinatorial arguments, which apply only to point sets, may be stronger tools for bounding incidences arising from $k$-planes which are not so rich in points. 
\end{rem}

\begin{rem}  The scheme-theoretic methods of this paper may seem very distant from anything that could be of use in Euclidean problems.  But there is an interesting similarity between the degeneration method used here and the method used by Bennett, Carbery, and Tao in their work on the multilinear Kakeya conjecture~\cite{bct}.  Their work required bounding an $\ell^p$ norm on a sum of characteristic functions of thin tubes in different directions; one idea in their paper involves sliding all these tubes towards $0$ until they all intersect at the origin, and showing that the quantities they are trying to bound only go up under that process.  (See especially \cite[Question 1.14]{bct}.)  An argument of this kind can also be found in \cite{bbc}. Our method is in some sense very similar; the main degeneration we consider is a dilation, where all points in $S$ move to $0$ and all lines in direction $\omega$ slide to the line through $0$ in direction $\omega$.   Our hope is that the large existing body of work in this area of algebraic geometry may provide more ideas for carrying out ``degeneration" arguments in the Euclidean setting.
\end{rem}

This paper is organized as follows.  In Section~\ref{sec: notation} we outline some notation that we will use throughout the paper.  In Section~\ref{sec:sketch of proof} we give a detailed sketch of the proof of Theorem~\ref{th:main}.  Section~\ref{sec:xm} contains much of the technical work of the paper, as we construct the schemes $X^S_{m,k}$ and study some of their essential properties.  In Section~\ref{sec:equality}, we focus on the special case of when $X^S_{m,k}$ equals the entire Grassmanian, as this plays a central role in our main results.  Sections~\ref{sec:proof of main theorem} and ~\ref{sec: proof of furstenberg} then contain the proofs of Theorems~\ref{th:main} and \ref{thm: new main}.  In Section~\ref{sec:restriction} we discuss an approach to the $k$-plane restriction conjecture of \cite{eot}, and Section~\ref{sec:examples} concludes with a few examples.

\subsection*{Acknowledgments}
We thank Terry Tao for introducing us to the question and for noting the similarity between our method and that of \cite{bct}, and Ruixiang Zhang for helpful discussions about the Furstenberg set problem. 

\section{Notation and Background}\label{sec: notation}
In this subsection we gather some of the notation that we will use throughout.  For reference, we also gather some of the notation from the introduction. Throughout, $\kk$ will denote an arbitrary field and $\F_q$ will denote a finite field of cardinality $q$.  If $Z$ is a scheme over $\kk$ and $\kk'$ is a field over $\kk$, then we write $Z(\kk')$ for the $\kk'$-valued points of $Z$.

We use $S$ to denote a $0$-dimensional subscheme of $\A^n/{\kk}$, and $I_S$ to denote its defining ideal, so that  $S = \Spec \kk[x_1, \ldots, x_n] / I_S$.  We set $|S|:=\dim_{\kk} \kk[x_1, \ldots, x_n] / I_S$.  If $S$ is a $0$-dimensional subscheme of $\A^n/\kk$ as above, and $V$ is a linear space cut out by linear forms $\ell_1, \ldots \ell_s$, we mean by $S\cap V$ the scheme-theoretic intersection $\Spec \kk[x_1, \ldots, x_n]/(I_S + (\ell_1, \ldots, \ell_s))$.  We say that $V$ is \defi{$m$-rich for $S$} if $|S\cap V|\geq m$.

We also review a few concepts about ideals.  Let $R$ be the coordinate ring of an algebraic variety $Z\subseteq \PP^r$ and let $J\subseteq R$ be an ideal.  The radical of $J$, denoted $\sqrt{J}$, is the ideal
\[
\sqrt{J} = \{ f\in R | f^n\in J \text{ for some } n\geq 0\}.
\]
The ideal $\sqrt{J}$ contains all functions that vanish on the subset $V(J)\subseteq Z$.  The $m$'th power of $J$, denoted $J^m$, is the ideal
\[
J^m =\{ f=f_1f_2\cdots f_m | f_i\in J\} \subseteq R.
\]
If $J$ is prime, then we can define the $m$th symbolic power of $J$, denoted $J^{(m)}$, to be the $J$-primary component of  $J^m$.  A similar definition is used for more general ideals $J$.

However, if $Z$ is a smooth variety, then there is a more geometric characterization of symbolic powers due to Zariski and Nagata~\cite[\S3.9]{eisenbud-book}.  Assume that $I\subseteq R$ is radical.  Then the symbolic power $I^{(m)}$ equals the ideal of functions that vanish with multiplicity $m$ along the locus $V(I)\subseteq X$.  In particular, if we write $\mathfrak m_x$ for the homogeneous prime ideal in $R$ corresponding to a point $x\in V(I)$, then
\[
I^{(m)} = \cap_{x\in V(I)} \mathfrak m_x^m.
\]
In general, we have $I^m\subseteq I^{(m)}$, but not equality.

\section{Sketch of the proof}\label{sec:sketch of proof}
We begin with an overall sketch of the proof of Theorem~\ref{th:main}.  The idea is as follows.  Let $S$ be a $0$-dimensional subscheme of $\A^n$.  Then we can degenerate $S$ by {\em dilation} to a subscheme $S_0$ of $\A^n$ which is supported at the origin, and which has $|S_0| = |S|$.  We may think of $S_0$ as the limit of $tS$ as $t$ goes to $0$.  If $V$ is a $k$-plane with $|S\cap V| \geq q^c$, then $|S_0 \cap V_0| \geq q^c$ where $V_0$ is the $k$-plane through the origin parallel to $V$.  In particular, the Furstenberg condition on $S$ implies that $|S_0 \cap V_0| \geq q^c$ for {\em every} $\F_q$-rational $k$-plane through the origin in $\A^n$.  The  supremum over a parallel family of $k$-planes has disappeared from the condition, which allows for an easy induction argument reducing us to the case $k=n-1$.  Namely:  given that Theorem~\ref{th:main} holds for $k=n-1$, let $W_0$ be a $(k+1)$-plane through the origin in $\A^n$.  Every $k$-plane $V_0$ through the origin in $\A^n$ satisfies $|S_0 \cap V_0| \geq q^c$, so Theorem~\ref{th:main} tells us that $|S_0 \cap W_0| \geq q^{c(k+1)/k}$ for every choice of $W_0$.  Iterating this argument $n-k$ times gives us the desired bound $|S_0| \geq q^{cn/k}$.

This leaves the proof of Theorem~\ref{th:main} in the hyperplane case.  We prove this proposition by considering a geometric version of the Radon transform.  The Radon transform may be thought of as a function $f_S$ on the Grassmannian $\Gr(n-1,n) \cong \P^{n-1}$, defined by
\beq
f_S(V_0) = |S_0 \cap V_0|.
\eeq
 (Usually the Radon transform is thought of as a function on all hyperplanes, not only those through the origin; in this case, since $S_0$ is supported at the origin, the Radon transform vanishes on those hyperplanes not passing through the origin.) 
 
Unfortunately, the notion of real-valued function doesn't transfer to the scheme-theoretic setting very neatly; what works better is the notion of level set.  Naively:  we might define
\beq
X_{m,n-1}^{S_0} = \{V_0 \in \Gr(n-1,n): |V_0 \cap S_0|  \geq m\}
\eeq
as the set of $m$-rich hyperplanes through the origin.

It turns out, however, that to make the notion of Radon transform behave well under degeneration, we need to think of the level set $X_{m,n-1}^{S_0}$ not as a subset of the $\kk$-points of $\Gr(n-1,n)$, but as a {\em subscheme} of $\Gr(n-1,n)$. In fact, for easy formal reasons, it is a closed subscheme.  This viewpoint has the further advantage that we can argue geometrically, without any reference to the field over which we are working.  We explain the definition of $X_{m,n-1}^{S_0}$ and its behavior under degeneration of $S$ in Section~\ref{sec:xm}, which is where most of the technical algebraic geometry is to be found.

We show in Proposition~\ref{prop:Xm equals Gr} that for $m \gg N^{(n-1)/n}$, the level scheme $X_{m,n-1}^{S_0}$ is not the whole of $\Gr(n-1,n)$.  This argument involves a further degeneration, a {\em Gr\"{o}bner degeneration} from $S_0$ to a member of a yet more restricted class of schemes called Borel-invariant subschemes. Thus, $X_{m,n-1}^{S_0}$, being Zariski closed, is contained in a proper hypersurface.  We bound the degree of this hypersurface in part (2) of Theorem~\ref{thm: new main} (by means of explicit defining equations), and this provides the final piece of the proof of Theorem~\ref{th:main}.

\section{The schemes $X_{m,k}^S$}
\label{sec:xm}

Beginning in this section, we work over an arbitrary field $\kk$ and omit the field $\kk$ from most of the notation, e.g. writing $\A^n$ in place of $\A^n/\kk$.  It may be useful for the reader to imagine that $\kk=\F_q$.   

Initially, we let $S$ be a collection of points (a reduced $0$-dimensional scheme) in $\A^n$.  Let $S_0$ be the degeneration of $S$ by the {\em dilation} action.  This can be defined concretely as follows.  Let $I \subset \kk[x_1, \ldots, x_n]$ be the ideal of polynomials vanishing at $S$.  If $t$ is an element of $\kk^*$, then the ideal of functions vanishing at the dilation $S_t:=tS$ is precisely
\beq
I_t = \{f(t^{-1} x_1, \ldots, t^{-1} x_n): f \in I\}.
\eeq
We then ask what happens as ``$t$ goes to $0$."  Of course, this doesn't literally make sense since $\kk$ is not necessarily $\R$ or $\C$, but may be a finite field or something even more exotic.  Nonetheless, if one thinks of $t$ as getting ``smaller", than $f(t^{-1} x_1, \ldots, t^{-1} x_n)$ will be ``dominated" by its highest-degree term $f_d$, a homogeneous polynomial.  So the dilation $I_0$ is defined to be the homogeneous ideal generated by the highest-degree terms of polynomials in $I$, and $S_0 = \Spec \kk[x_1,\dots,x_n] / I_0$ is the subscheme of $\A^n$ cut out by the vanishing of the polynomials of $I_0$.  It's clear that $|S_t| = |S|$ for all $t \in \kk^*$; in fact, $S_t$ is isomorphic to $S$.  It turns out that $S_0$, while not typically isomorphic to $S$, does satisfy $|S_0| = |S|$, as a consequence of the Hilbert polynomial being constant in flat families.

Now let $\Sigma^S_{m,k}\subseteq \Gr(k,n)(\kk)$ denote the set of directions of all $k$-planes that are $m$-rich for $S$.  As observed in the first paragraph of Section~\ref{sec:sketch of proof}, if $S_0$ is the degeneration of $S$ by the dilation action, then $\Sigma^{S_0}_{m,k}$ will contain $\Sigma^S_{m,k}$.  This follows from the following standard lemma.

\begin{lemma}  Let $V$ be a $k$-plane in $\A^n$ such that $|S\cap V| \geq m$.  Let $V_0$ be the $k$-plane through the origin parallel to $V$.  Then $|S_0 \cap V_0| \geq m$.
\label{capdilate}
\end{lemma}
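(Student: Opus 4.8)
The plan is to translate the geometric assertion into a statement about ideals and then use the standard flatness/semicontinuity argument. Write $I = I_S$ for the ideal of $S$ and $I_0$ for its dilation (the ideal generated by the top-degree forms of elements of $I$), so that $S_0 = \Spec \kk[x_1,\dots,x_n]/I_0$. If $V$ is the $k$-plane through a point $p$ in direction $\omega \in \Gr(k,n)$, then $V_0$ is the translate of $V$ by $-p$, which is the $k$-plane through the origin in direction $\omega$. The quantity $|S\cap V|$ is $\dim_\kk \kk[x_1,\dots,x_n]/(I + I_V)$, where $I_V$ is the ideal of $V$; similarly $|S_0\cap V_0| = \dim_\kk \kk[x_1,\dots,x_n]/(I_0 + I_{V_0})$.

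First I would set up the one-parameter family explicitly. Introduce a new variable $t$ and work over $\kk[t]$ (or, to be careful about the degeneration, over $\kk[t]$ localized appropriately, following the usual construction of the Gröbner/dilation degeneration): let $\widetilde{I} \subseteq \kk[t][x_1,\dots,x_n]$ be the ideal whose fiber over $t \neq 0$ is $I_t$ and whose fiber over $t = 0$ is $I_0$. Concretely, for each $f \in I$ of degree $d$, the element $t^d f(t^{-1}x_1,\dots,t^{-1}x_n)$ lies in $\kk[t][x]$ and specializes to $f_d$ (the top term) at $t=0$ and to a scalar multiple of $f$ at each nonzero $t$; let $\widetilde I$ be generated by these. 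The scheme $\mathcal{S} = \Spec \kk[t][x]/\widetilde I$ is flat over $\A^1_t = \Spec \kk[t]$ — this is precisely the statement that the Hilbert function of the fibers is constant, which is the content of the sentence in the excerpt about the Hilbert polynomial being constant in flat families, and which in turn is why $|S_0| = |S|$. Now intersect with the corresponding family of $k$-planes: let $\mathcal{V}$ be the family over $\A^1_t$ whose fiber over $t\neq 0$ is the $k$-plane $V_t := tV$ (equivalently, $V$ dilated by $t$) and whose fiber over $t=0$ is $V_0$; its ideal is generated by $t \cdot \ell(t^{-1}x)$ for the linear forms $\ell \in I_V$, and these specialize to the linear parts of the $\ell$, i.e. to generators of $I_{V_0}$.

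The key step is then upper semicontinuity of fiber dimension. The relative intersection $\mathcal{S} \cap \mathcal{V} \to \A^1_t$ has $0$-dimensional fibers, and $t \mapsto \dim_{\kappa(t)} \mathcal{O}_{\mathcal{S}\cap\mathcal{V}} \otimes \kappa(t)$ is upper semicontinuous on the base (this is the standard semicontinuity of fiber length for a finite — hence proper and affine — morphism, or one can argue directly: the function $\kk[t]/\langle t - t_0\rangle$ picks out the fiber, and the length of the fiber can only jump up under specialization). Therefore
\[
|S_0 \cap V_0| \;=\; \dim_\kk \mathcal{O}_{\mathcal{S}\cap\mathcal{V}}\otimes\kappa(0) \;\geq\; \dim_{\kappa(t_0)} \mathcal{O}_{\mathcal{S}\cap\mathcal{V}}\otimes\kappa(t_0) \;=\; |S_{t_0}\cap V_{t_0}| \;=\; |S\cap V| \;\geq\; m,
\]
where the last equality before the hypothesis uses that $S_{t_0} \cap V_{t_0}$ is isomorphic to $S \cap V$ for $t_0 \in \kk^*$ (the dilation by $t_0$ is an automorphism of $\A^n$ carrying $S \cap V$ to $S_{t_0}\cap V_{t_0}$). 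This gives the claim. The main obstacle — really the only subtle point — is checking flatness of $\mathcal{S}$ (equivalently, that no length leaks to infinity or collapses as $t \to 0$), but since $S$ is $0$-dimensional this is exactly the classical fact that the dilation degeneration preserves the Hilbert function, which the excerpt has already invoked; once flatness is in hand, and since the ambient spaces are affine of finite type so that the intersection scheme is finite over the base, semicontinuity is automatic and the inequality follows formally.
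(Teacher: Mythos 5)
Your proof is correct and takes essentially the same route as the paper's: build the one-parameter family $S_t \cap V_t$ over $\A^1 = \Spec \kk[t]$, note the fiber degree equals $|S\cap V|$ for $t\neq 0$, and invoke upper semicontinuity at $t=0$. The paper's argument is just a compressed version of yours, citing Hartshorne III.12.8 (or its own Proposition 4.8) for the semicontinuity step; one minor quibble is that your parenthetical "since the ambient spaces are affine of finite type" is not by itself a reason the intersection is finite over the base -- what makes it work is that $\mathcal{S}\cap\mathcal{V}$ is a closed subscheme of $\mathcal{S}$, which is finite over $\A^1$ because it is flat with zero-dimensional fibers of constant degree.
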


Geometrically, we think of the rationale for Lemma~\ref{capdilate} as follows:  if $V$ is a $k$-plane with $|S\cap V| \geq m$, then for every $t$, the dilation $tS$ is contained in the plane $tV$.  As $t$ goes to $0$, $tV$ converges to the $k$-plane $V_0$ parallel to $V$ and through the origin, and we find that $|S_0\cap V_0| \geq m$.

\begin{proof}
We consider $S_t\cap V_t$ as a family of $0$-dimensional schemes over $\A^1 = \Spec(\kk[t])$.  When $t\ne 0$, the degree of the fiber is constant and equals $|S\cap V|$. By semicontinuity (see~\cite[Theorem III.12.8]{hartshorne} or Proposition~\ref{prop:hilb poly semicontinuity} below) we have $|S_0\cap V_0|\geq |S\cap V|$.
\end{proof}

We henceforth focus on the case on the case where $S$ is a non-reduced $0$-dimensional scheme supported at the origin and defined by a homogeneous ideal $I_S\subseteq \kk[x_1,\dots,x_n]$.  We let $N:=|S|=\dim_{\kk} \kk[x_1,\dots,x_n]/I_S$.  

\subsection{Constructing the schemes $X_{m,k}^S$}

In this section, and henceforth, we adopt a more geometric point of view, replacing the set $\Sigma^S_{m,k}$ with a moduli scheme $X_{m,k}^S$ of $m$-rich $k$-plane directions, satisfying $X_{m,k}^S(\kk) = \Sigma^S_{m,k}$.

We let $H^N$ stand for the $\mathbb G_m$-equivariant Hilbert scheme $\Hilb^N(\mathbb A^n)$.  This scheme parametrizes homogeneous ideals $J\subseteq \kk[x_1,\dots,x_n]$ such that $\dim_{\kk} \kk[x_1,\dots,x_n]/J =N$; equivalently, it parametrizes zero-dimensional subschemes of $\mathbb A^n$ that are equivariant with respect to the $\mathbb G_m$ dilation action.  (Note that $H^N$ decomposes as a union of multigraded Hilbert schemes depending on the Hilbert function of $J$.  See \cite[Theorem~1.1]{haiman-sturmfels} for details.)

We want to define an incidence scheme that parametrizes pairs $(V, S)$ where $V$ is an $m$-rich $k$-plane for $S$.
We will write $[S]\in H^N$ for the point corresponding to $S$ and we will similarly write $[V]\in \Gr(k,n)$ for the class corresponding to a $k$-plane $V$.
We define our incidence scheme as follows.  Let $\mathcal I_H\subseteq \cO_{H^N}[x_1,\dots,x_n]$ be the ideal sheaf for the universal family over the Hilbert scheme.  We write $\cO_U:=\cO_{H^N}[x_1,\dots,x_n]/\mathcal I$ for the structure sheaf of the universal family over $H^N$.  Note that $\cO_U$ (or more precisely its pushforward, though we ignore this in the notation) is a vector bundle of rank $N$ on $H^N$.  

Now let $W$ be the vector space $\langle x_1,\dots,x_n\rangle$.  There is a tautological sequence
\[
0\to \mathcal S \to \cO_{\Gr}\otimes W \to \mathcal Q \to 0
\]
of vector bundles on $\Gr(k,n)$ of rank $n-k, n,$ and $k$ respectively.  Note that $\cO_{\Gr}\otimes W$ is the space of linear forms in the algebra $\cO_{\Gr}\boxtimes \kk[x_1,\dots,x_n]$ and the fiber of $\mathcal S$ over the point $[V]\in \Gr(k,n)$ is the $(n-k)$-dimensional space of linear forms vanishing at $V$.  In other words, we can rewrite this as a map $\mathcal S=\mathcal S\boxtimes \kk \to \cO_{\Gr}\boxtimes \kk [x_1,\dots,x_n]$.

Tensoring the righthand factors by $-\otimes_{\kk[x_1,\dots,x_n]} \cO_U$ then yields
a map of of vector bundles on $\Gr(k,n)\times H^N$
\begin{equation}\label{eqn:Phi}
\Phi\colon \mathcal S \boxtimes \cO_U\to \mathcal \cO_{\Gr}\boxtimes \cO_U
\end{equation}
of ranks $(n-k)N$ and $N$ respectively.  We define $Y_{m,k}\subseteq \Gr(k,n)\times H^N$ by the vanishing of the $(N-m+1)\times (N-m+1)$ minors of $\Phi$.

We claim that the points of $Y_{m,k}$ in $\Gr(k,n) \times H^N$ are precisely those pairs $([V],[S])$ such that $|S\cap V| \geq m$.  To see this, we consider a fixed $0$-dimensional scheme $S$ such that $|S|=N$.  
\begin{defn}\label{defn:XmkS}
Fix $m,k$ and $S$ as above, with $|S|=N$.  We define $X_{m,k}^S$ to be the fiber of $Y_{m,k}$ over $[S]\in H^N$:
\[
\xymatrix{
X_{m,k}^S\ar[r]\ar[d]&Y_{m,k}\ar[d]\\
[S]\ar[r]&H^N
}
\]
\end{defn}
The defining equations of $X_{m,k}^S$ are given by the $(N-m+1)\times (N-m+1)$ minors of
\begin{equation}\label{eqn:Phibar}
\overline{\Phi}\colon \mathcal S\boxtimes \cO_S \to \cO_{\Gr}\boxtimes \cO_S,
\end{equation}
which is a map of vector bundles on $\Gr(k,n)$.
At a point $[V] \in \Gr(k,n)$ the cokernel of $\overline{\Phi}$ defines the structure sheaf of $S\cap \Lambda$.  Thus, $[V]\in X_{m,k}^S$ if and only if the the cokernel has degree at least $m$, which is exactly what we wanted.  In particular, as a set,
\[
X_{m,k}^S(\kk) = \{[V]  \text{ where $V$ is $m$-rich for $S$}\}\subseteq \Gr(k,n)(\kk) = \Sigma_{m,k}^S
\]
which is what we claimed above.

\subsection{Local structure}
Fix $k$ and $S$ as above.   We embed $\Gr(k,n)\subseteq \PP^{\binom{n}{k}-1}$ via the Pl\"ucker embedding so that $X^S_{m,k}\subseteq \PP^{\binom{n}{k}-1}$ for all $m$.

\begin{defn}
We let $J_{X_m}$ be the ideal of $(N -m+1)\times (N -m+1)$ minors of $\overline{\Phi}$ considered as an ideal on the homogeneous coordinate ring of $\Gr(k,n)$, and we let $I_{X_m}:=\sqrt{J_{X_m}}$ denote the radical of $J_{X_m}$.
\end{defn}
Note that $J_{X_m}\subseteq I_{X_m}$ and that both ideals define the same closed subscheme, but they may not be equal.  In particular, it is possible that there could be low degree polynomials vanishing on $X_{m,k}^S$ (and hence lying in $I_{X_m}$) which do not come from $J_{X_m}$.

\begin{lemma}\label{lem:degree defining equations}
There is a constant $C$ depending only on $n$ and $k$ such that the ideals $J_{X_m}$ is generated in degree at most $C(|S|-m+1)$.  It follows that $I_{X_m}$ contains an element of degree at most $C(|S|-m+1)$, i.e. that $X_m$ lies on a hypersurface of degree at most $C(|S|-m+1)$.
\end{lemma}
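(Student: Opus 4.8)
The plan is to bound the degrees of the generators of $J_{X_m}$ by bounding the degrees of the entries of the matrix representing $\overline\Phi$, since the ideal of $(N-m+1)\times(N-m+1)$ minors of a matrix of forms is generated in degree equal to the sum of $N-m+1$ entries, one from each of $N-m+1$ distinct rows. So the key is: with respect to a suitable basis of $\mathcal S\boxtimes\cO_S$ and $\cO_{\Gr}\boxtimes\cO_S$ and an appropriate choice of twists, the matrix of $\overline\Phi$ has entries that are forms on $\Gr(k,n)\subseteq\PP^{\binom nk -1}$ (in the Plücker coordinates) of degree bounded by a constant $C_0=C_0(n,k)$. Granting this, each generating minor has degree at most $C_0(N-m+1)=C_0(|S|-m+1)$, which is the assertion, and then any nonzero such minor (the ideal is proper since $X_m\subsetneq\Gr$ in the relevant range, but even if $X_m=\Gr$ the statement is about $J_{X_m}$ as an ideal of the homogeneous coordinate ring, so some generator lies in $I_{X_m}=\sqrt{J_{X_m}}$) gives an element of $I_{X_m}$ of degree at most $C(|S|-m+1)$, hence a hypersurface through $X_m$ of that degree.

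First I would unwind the construction of $\overline\Phi$ in \eqref{eqn:Phibar}. The target $\cO_{\Gr}\boxtimes\cO_S$ is the trivial bundle of rank $N$ (a fixed basis of $\cO_S=\kk[x_1,\dots,x_n]/I_S$ as a $\kk$-vector space). The source $\mathcal S\boxtimes\cO_S$ is $\mathcal S$ — the tautological subbundle on $\Gr(k,n)$, whose fiber over $[V]$ is the space of linear forms vanishing on $V$ — tensored with the same rank-$N$ space. On a Plücker-standard affine chart of $\Gr(k,n)$, $\mathcal S$ has a basis of sections whose coordinates in $W=\langle x_1,\dots,x_n\rangle$ are ratios of Plücker coordinates, i.e. after clearing denominators, linear in the Plücker coordinates; more invariantly, the inclusion $\mathcal S\hookrightarrow \cO_{\Gr}\otimes W$ twisted by $\cO_{\Gr}(1)$ (the Plücker line bundle) is given by a matrix of linear forms in the Plücker coordinates. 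Then $\overline\Phi$ factors as: apply this linear inclusion $\mathcal S(1)\to \cO_{\Gr}(1)\otimes W$, then multiply linear forms by the fixed $\kk$-basis elements of $\cO_S$ and re-express the products in that basis — this second step is $\kk$-linear, i.e. constant in the Plücker coordinates. Hence, after twisting the source by $\cO_{\Gr}(-1)$, the map $\overline\Phi$ is represented by a matrix whose entries are linear forms in the Plücker coordinates: $C_0=1$ works up to the twist bookkeeping, and in any case $C_0$ depends only on $n,k$.

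I would then record the elementary fact that if $A$ is a $p\times q$ matrix of homogeneous forms with $\deg A_{ij}\le d$ for all $i,j$ on a projective variety, then the ideal of $r\times r$ minors is generated in degrees $\le rd$: each $r\times r$ minor is a sum of products of $r$ entries drawn from $r$ distinct rows, hence homogeneous of degree $\le rd$. Applying this with $r=N-m+1$, $d=C_0$ gives generators of $J_{X_m}$ in degree $\le C_0(N-m+1)=C_0(|S|-m+1)$, and setting $C:=C_0$ finishes the lemma; the final sentence about $I_{X_m}$ and the hypersurface is then immediate, since $I_{X_m}\supseteq J_{X_m}$ contains each such generator, and any nonzero one cuts out a hypersurface of $\Gr(k,n)$ of the asserted degree containing $X^S_{m,k}$.

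The main obstacle is the bookkeeping in the second paragraph: one must be careful that ``$\mathcal S$ has a presentation by linear forms in the Plücker coordinates'' is stated correctly — this is really the statement that the tautological subbundle on the Grassmannian, pulled back along the Plücker embedding, is a quotient/sub of a trivial bundle by a map of the right degree, which can be checked chart-by-chart on the standard Plücker affine opens (there the entries are honest polynomials of bounded degree in the remaining Plücker coordinates), and then one globalizes by noting the patching is by units. One also has to make sure the $\kk$-linear ``multiplication by $\cO_S$-basis elements and reduce mod $I_S$'' step genuinely contributes no positive degree in the Plücker variables — it is multiplication in the $\kk$-algebra $\cO_S$ followed by a change of basis, all over the base field, hence constant on $\Gr(k,n)$. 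Everything else is the standard degree count for minors, and the constant $C$ is explicit in terms of $n$ and $k$ (it absorbs the twist and the chart-patching, but does not depend on $m$, $S$, or $\kk$).
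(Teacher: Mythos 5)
Your proposal is correct and follows essentially the same approach as the paper: reduce $\overline\Phi$ to a map between direct sums of line bundles whose entries are Pl\"ucker forms of degree bounded by a constant $d=d(n,k)$, then observe that the $(N-m+1)\times(N-m+1)$ minors are forms of degree $\le d(N-m+1)$. The paper phrases this step more abstractly (there exists $d$ with $\mathcal S\otimes\cO_{\Gr}(d)$ globally generated, giving a surjection $\cO_{\Gr}(-d)^{\oplus MN}\twoheadrightarrow \mathcal S^{\oplus N}$ and hence a surjection onto $\mathcal J_{X_m}$ after taking exterior powers), whereas you try to pin down $d=1$ via explicit Pl\"ucker chart computations; both work, and yours has the modest advantage of an explicit constant. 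The one thing to tighten: $\mathcal S^{\oplus N}$ is not a trivial bundle, so $\overline\Phi$ is not literally ``given by a matrix'' (the paper flags exactly this in its Section~8 example) — what you mean, and what makes the argument rigorous, is that the composite $\cO_{\Gr}(-1)^{\oplus MN}\twoheadrightarrow \mathcal S^{\oplus N}\xrightarrow{\overline\Phi}\cO_{\Gr}^{\oplus N}$ is a matrix of linear Pl\"ucker forms, and its determinantal ideal equals that of $\overline\Phi$ because precomposing with a surjection does not change the image of the $r$th exterior power. You acknowledge this bookkeeping issue in your last paragraph, and filling it in exactly reproduces the paper's argument with $d=1$.
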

\begin{proof}
Let $N:=|S|$, so that we we can identify $\cO_S$ with $\kk^N$, and have $\overline{\Phi}: \mathcal S^{\oplus N} \to \cO_{\Gr}^{\oplus N}$.  Let $\cO_{\Gr}(1)$ be the Pl\"ucker line bundle on $\Gr(k,n)$.  There is some constant $d$, depending only on $k$ and $n$, such that $\mathcal S\otimes \cO_{\Gr}(d)$ is globally generated.  If $M:=\dim H^0(\Gr(k,n),\mathcal S\otimes \cO_{\Gr}(d))$, we have a surjection
\[
\cO_{\Gr}(-d)^{\oplus M\cdot N}\to \mathcal S^{\oplus N}.
\]

We now take $k\times k$ minors of $\Phi$, with $k=|S|-m+1$, which yields the ideal sheaf $\mathcal J_{X_m}$ corresponding to the ideal $J_{X_m}$ as the image of the map
\[
\bigwedge^k \Phi = \bigwedge^k \mathcal S^{\oplus N} \otimes \bigwedge^k \cO_{\Gr}^{\oplus N} \to \cO_{\Gr}.
\]
There is a natural surjection 
\[
\bigwedge^k \cO_{\Gr}(-d)^{\oplus M\cdot N}\otimes \bigwedge^k (\cO_{\Gr}^{\oplus N})^*
\to \bigwedge^k \mathcal S^{\oplus N} \otimes \bigwedge^k (\cO_{\Gr}^{\oplus N})^*
\]
which in turn surjects onto $\mathcal J_{X_m}$.  This proves that $J_{X_m}$ is generated in degree at most $d\cdot k$.  Since $I_{X_m}\supseteq J_{X_m}$ the second statement follows immediately.

\end{proof}

\begin{lemma}\label{lem:local Xm structure}
Assume that the $k$-plane $V$ satisfies $|S\cap V|\geq m$.  Let $\mathfrak m_V$ be the maximal ideal of the point $[V]\in \Gr(k,n)$.   If $m\geq \ell$ then
\[
J_{X_\ell}\subseteq \mathfrak m_V^{m-\ell+1}.
\]
\end{lemma}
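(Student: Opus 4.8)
The plan is to work in the local ring $\cO_{\Gr(k,n),[V]}$ with maximal ideal $\mathfrak m_V$, and to show that the matrix $\overline{\Phi}$ for the map \eqref{eqn:Phibar}, written in a suitable local trivialization near $[V]$, has a large block of entries lying in $\mathfrak m_V$. Concretely: choose local coordinates so that $\mathcal S$ and $\cO_{\Gr}\boxtimes\cO_S$ are trivialized near $[V]$, and write $\overline{\Phi}$ as an $N\times (n-k)N$ matrix $A$ with entries in $\cO_{\Gr(k,n),[V]}$. The key point is that at the special point $[V]$ itself, the map $\overline{\Phi}|_{[V]}$ has cokernel $\cO_{S\cap V}$, which has dimension $|S\cap V|\geq m$; equivalently the rank of $A$ evaluated at $[V]$ is $N-|S\cap V|\leq N-m$. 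So if we pick the trivialization of the target $\cO_{\Gr}\boxtimes \cO_S$ appropriately — split off a direct summand of rank $|S\cap V|$ corresponding to (a basis of) the cokernel at $[V]$ — then after row reduction over the local ring, $A$ takes a block form in which an entire $|S\cap V|\times (n-k)N$ block, hence in particular an $m\times (n-k)N$ block, has all entries in $\mathfrak m_V$ (because those rows vanish identically at $[V]$).

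Given such a block form, the estimate is a linear-algebra fact about determinantal ideals: the ideal $J_{X_\ell}$ is generated by the $(N-\ell+1)\times(N-\ell+1)$ minors of $A$. Any such minor uses $N-\ell+1$ rows of $A$. Since at most $N-m$ of the rows of $A$ can avoid the ``bad'' $m$-row block (those $N-m$ rows being the complement of the $m$-row block inside the $N$ rows), every $(N-\ell+1)$-subset of rows must include at least $(N-\ell+1)-(N-m) = m-\ell+1$ rows from the bad block. Expanding the minor along those rows by (generalized) Laplace expansion, or simply noting that each of those $\geq m-\ell+1$ rows contributes entries in $\mathfrak m_V$ to every term of the determinant expansion, shows that the minor lies in $\mathfrak m_V^{\,m-\ell+1}$. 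Hence every generator of $J_{X_\ell}$ lies in $\mathfrak m_V^{\,m-\ell+1}$, which is what we want. (One should double-check the book-keeping on ranks: the minors are of size $N-\ell+1$, and the hypothesis $m\geq\ell$ guarantees $m-\ell+1\geq 1$, so the statement is nonvacuous.)

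The main obstacle — really the only subtle point — is making the reduction to block form rigorous and basis-independent: one must verify that the ideal of minors is unchanged under the $\mathrm{GL}_N(\cO_{\Gr,[V]})$ action on the rows (standard, since invertible row operations preserve the ideal of $t\times t$ minors for each fixed $t$), and that one genuinely can choose the local trivialization of the target so that a rank-$|S\cap V|$ summand is spanned by rows vanishing at $[V]$. This last claim follows because the cokernel of $\overline{\Phi}|_{[V]}$ has dimension $|S\cap V|$, so the image of $\overline{\Phi}|_{[V]}$ has dimension $N-|S\cap V|$; lift a basis of that image to elements of the target over the local ring and extend to a full basis, and in the dual/row picture the complementary $|S\cap V|$ coordinate rows of $A$ vanish at $[V]$. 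Everything else is the elementary determinant-expansion counting above, and I would present it in that order: (i) reduce to a local matrix computation and note invariance of the minor ideals under local row operations; (ii) use $\operatorname{coker}(\overline{\Phi}|_{[V]})\cong \cO_{S\cap V}$ to put $A$ in the desired block form with an $m$-row block in $\mathfrak m_V$; (iii) count rows in each $(N-\ell+1)$-minor and conclude membership in $\mathfrak m_V^{\,m-\ell+1}$.
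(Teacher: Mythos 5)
Your proposal is correct and follows essentially the same route as the paper: localize $\overline{\Phi}$ at $[V]$, exploit the rank drop of the matrix at the closed point to expose $m$ rows whose entries all lie in $\mathfrak m_V$, and observe that every $(N-\ell+1)\times(N-\ell+1)$ minor must involve at least $m-\ell+1$ of those rows, hence lies in $\mathfrak m_V^{\,m-\ell+1}$. The only cosmetic difference is that the paper additionally performs column operations to reach a block-diagonal form and identifies the minor ideal with the ideal of $(m-\ell+1)$-minors of the bad block, whereas you stop at the row reduction and finish with a determinant-expansion count; the two are interchangeable.
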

\begin{proof}
We localize the map $\overline{\Phi}$ from \eqref{eqn:Phibar} at the point $[V]$ to get an $N(n-k)\times N$ map of free $\cO_{\Gr,[V]}$-modules.  After choosing bases, we can write this as a matrix, and we denote this by $\overline{\Phi}_{[V]}$.  Since $V$ intersects $S$ in degree $m$, it follows that $\overline{\Phi}_{[V]}$ has rank $N-m$.  We are over a local ring, so every entry of this matrix is either a unit or lies in the maximal ideal $\mathfrak m_{V}$.  The matrix thus has a minor of size $(N-m)\times (N-m)$ that is a unit, and so after inverting this element and performing row and column operations, we can rewrite
\[
\overline{\Phi}_{V}=\begin{pmatrix} Id_{N-m} & 0\\ 0&A\end{pmatrix}
\]
where $A$ is an $N(n-k)-(N-m)\times m$ matrix consisting entirely of entries lying in the maximal ideal (otherwise $\Phi_{v}$ would have rank $N-m+1$).

It follows that the ideal of $(N-\ell+1)\times (N-\ell+1)$-minors of $\overline{\Phi}_{V}$ is the same as the ideal of $(m-\ell+1)\times  (m-\ell+1)$ minors of $A$, and every such minor is a determinant of entries lying in $\mathfrak m_{V}$, and this yields the desired inclusion.
\end{proof}
\begin{cor}\label{cor:local Xm symbolic}
If $m\geq \ell$ then $J_{X_\ell}$ belongs to the symbolic power $I_{X_m}^{(m-\ell+1)}$.
\end{cor}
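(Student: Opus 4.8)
The plan is to deduce this directly from Lemma~\ref{lem:local Xm structure} together with the Zariski--Nagata description of symbolic powers recalled in Section~\ref{sec: notation}. Since $I_{X_m}=\sqrt{J_{X_m}}$ is radical and cuts out the subscheme $X^S_{m,k}$ of the smooth variety $\Gr(k,n)$, that description applies: writing $\mathfrak m_{[V]}$ for the homogeneous prime of a point $[V]\in\Gr(k,n)$ in the Pl\"ucker coordinate ring, one has
\[
I_{X_m}^{(m-\ell+1)}=\bigcap_{[V]\in X^S_{m,k}}\mathfrak m_{[V]}^{m-\ell+1},
\]
the intersection ranging over all closed points of $X^S_{m,k}$; equivalently, $I_{X_m}^{(m-\ell+1)}$ is the ideal of functions vanishing to order at least $m-\ell+1$ at every point of $X^S_{m,k}$. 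So it suffices to show $J_{X_\ell}\subseteq\mathfrak m_{[V]}^{m-\ell+1}$ for each such $[V]$.

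Fix a closed point $[V]$ of $X^S_{m,k}$. By construction $X^S_{m,k}$ is cut out by the $(N-m+1)\times(N-m+1)$ minors of $\overline{\Phi}$, so those minors vanish at $[V]$ and hence the specialization of $\overline{\Phi}$ there has rank at most $N-m$; in other words $V$ is $m$-rich for $S$ after base change to the residue field at $[V]$. Lemma~\ref{lem:local Xm structure} therefore applies at $[V]$ — its proof only manipulates the localization $\overline{\Phi}_{[V]}$ over $\cO_{\Gr,[V]}$ via Nakayama's lemma, so nothing in it uses that $[V]$ is $\kk$-rational — and gives $J_{X_\ell}\subseteq\mathfrak m_{[V]}^{m-\ell+1}$. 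Thus every element of $J_{X_\ell}$ vanishes to order at least $m-\ell+1$ at $[V]$; as $[V]\in X^S_{m,k}$ was arbitrary, every element of $J_{X_\ell}$ vanishes to that order along all of $X^S_{m,k}$, and the displayed identity yields $J_{X_\ell}\subseteq I_{X_m}^{(m-\ell+1)}$.

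I do not expect a genuine obstacle: once Lemma~\ref{lem:local Xm structure} is available, the corollary is essentially bookkeeping, and the exponent $m-\ell+1\ge 1$ matches the hypothesis $m\ge\ell$. The one point worth a moment's care is that the Zariski--Nagata intersection runs over \emph{all} closed points of $V(I_{X_m})$, including those not rational over $\kk$, so one must be sure Lemma~\ref{lem:local Xm structure} remains valid at an arbitrary residue field; as noted above it does, since that lemma is proved by a purely local rank argument. (One could also base change to $\overline{\kk}$, where every closed point is rational and the lemma applies to honest $k$-planes, and then descend, but the direct argument is cleaner.)
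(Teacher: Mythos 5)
Your proof is correct and takes essentially the same route as the paper's: the paper's proof of this corollary is a one-line appeal to Lemma~\ref{lem:local Xm structure} and the Zariski--Nagata theorem on the smooth Grassmannian, and you have filled in exactly those details, including the (valid) observation that the local rank argument in the lemma works at every closed point, not just $\kk$-rational ones.
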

\begin{proof}
Since the Grassmanian is smooth, this follows from Lemma~\ref{lem:local Xm structure} and the Zariski-Nagata Theorem.  See also the discussion in Section~\ref{sec: notation}.
\end{proof}

\subsection{Semicontinuity}

The total parameter space $Y_{m,k}$ enables us to study properties of $X_{m,k}^S$ as $S$ varies in $H^N$.  We can assign $X_{m,k}^S$ a Hilbert polynomial in $\mathbb Q[t]$ via the Pl\"ucker embedding of the Grassmanian.  We compare polynomials in $\mathbb Q[t]$ by saying that $f(t)>g(t)$ if this is true for all $t\gg 0$.
\begin{prop}\label{prop:hilb poly semicontinuity}
Let $Z\subseteq \PP^r\times V$ be a closed subscheme and let $\pi: Z\to V$ the projection map.  For $v\in V$ we defined $Z_v$ as the scheme-theoretic fiber of $\pi$ over $v$.  The Hilbert polynomial of the fibers of $\pi$ are upper semicontinuous in the following sense: fix any $f(t)\in \mathbb Q[t]$; the set
$
\{ v\in V | \text{ the Hilbert polynomial of } Z_v \text{ is at least } f(t)\}
$
is a closed subset of $V$.
\end{prop}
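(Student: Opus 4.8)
This is a standard semicontinuity statement; the plan is to obtain it as a formal consequence of the flattening stratification of the projective morphism $\pi\colon Z\to V$. Recall that, since $V$ is Noetherian and $Z\subseteq\PP^r\times V$ is closed, there is a finite set of polynomials $p_1,\dots,p_s\in\mathbb Q[t]$ together with a stratification $|V|=\bigsqcup_i|V_i|$ of $V$ by locally closed subschemes $V_i\hookrightarrow V$, such that the restriction of $\mathcal O_Z$ to $\PP^r\times V_i$ is flat over $V_i$ with Hilbert polynomial $p_i$; moreover the formation of $\coprod_i V_i$ is universal, in the sense that an arbitrary morphism $T\to V$ has flat base change $Z\times_V T\to T$ precisely when it factors through $\coprod_i V_i$. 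Applying this universal property to the inclusion $\Spec\kappa(v)\hookrightarrow V$ of a point, whose base change $Z_v$ is trivially flat over a point, shows that each $v\in V$ lies in a unique stratum $V_{i(v)}$, and that the scheme-theoretic fiber $Z_v$ --- being a fiber of the flat family $Z\times_V V_{i(v)}\to V_{i(v)}$ --- has Hilbert polynomial exactly $p_{i(v)}$.

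Granting also the \emph{ordering} property of the flattening stratification, namely that $\overline{V_i}\subseteq\bigcup_{j\,:\,p_j\geq p_i}V_j$ for every $i$, where $\geq$ is the order on $\mathbb Q[t]$ from the statement, the rest is bookkeeping. By the previous paragraph the set in question is $W_f=\bigcup_{i\,:\,p_i\geq f}V_i$. For each index $i$ with $p_i\geq f$, the ordering property and the transitivity of $\geq$ give $\overline{V_i}\subseteq\bigcup_{j\,:\,p_j\geq p_i}V_j\subseteq\bigcup_{j\,:\,p_j\geq f}V_j=W_f$, so in fact $W_f=\bigcup_{i\,:\,p_i\geq f}\overline{V_i}$ is a finite union of closed subsets of $V$, hence closed.

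All the content therefore sits in the ordering property, and this one-parameter degeneration is the step I expect to require genuine care (it is essentially the ``flat limit is a subscheme of the honest fiber'' argument that underlies the flattening stratification). Given $v_0\in\overline{V_i}$, replace $\overline{V_i}$ by an irreducible component $T$ through $v_0$, taken with reduced structure, so that $A:=\mathcal O_{T,v_0}$ is a Noetherian local domain whose generic point $\eta_T$ lies in $V_i$; a standard lemma produces a discrete valuation ring $R$ with a local injection $A\hookrightarrow R$, so that $\Spec R\to V$ carries the closed point to $v_0$ and the generic point to $\eta_T$. Form $Z_R:=Z\times_V\Spec R$ inside $\PP^r\times\Spec R$, and let $\overline Z\subseteq Z_R$ be the scheme-theoretic closure of the generic fiber. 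Since $R$ is a DVR, $\mathcal O_{\overline Z}$ is $R$-torsion-free, hence flat over $R$; consequently the special fiber $\overline Z_0$ has the same Hilbert polynomial as the generic fiber, which is $p_i$ because Hilbert polynomials are insensitive to field extension and the generic fiber of $Z_R$ is a base change of the flat family over $V_i$. On the other hand $\overline Z_0$ is a closed subscheme of $(Z_R)_0 = Z_{v_0}\otimes_{\kappa(v_0)}(R/\mathfrak m_R)$, so
\[
p_i \;=\; P_{\overline Z_0}\;\leq\; P_{(Z_R)_0}\;=\;P_{Z_{v_0}},
\]
which is precisely the assertion $v_0\in\bigcup_{j\,:\,p_j\geq p_i}V_j$. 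The delicate points are thus the production of the DVR and the verification that the flat limit $\overline Z$ is genuinely a closed subscheme of the honest fiber $Z_{v_0}$ after base change; the remainder is formal.
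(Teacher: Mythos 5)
Your proof is correct, but it takes a genuinely different route from the paper's. Both arguments begin by invoking the flattening stratification of $\pi$ to reduce to a situation with only finitely many Hilbert polynomials $p_1,\dots,p_s$ among the fibers. From there, the paper picks $t_0$ larger than the Gotzmann numbers of all the $p_i$, so that for every fiber the Hilbert \emph{polynomial} agrees with the Hilbert \emph{function} in all degrees $\geq t_0$; comparison of Hilbert polynomials then reduces to comparison of Hilbert functions in finitely many fixed degrees, and closedness follows from ordinary upper semicontinuity of $h^0$ in a proper family. You instead prove the ordering property of the flattening stratification directly, namely that $\overline{V_i}\subseteq\bigcup_{j:\,p_j\geq p_i}V_j$, via the standard reduction to a DVR: dominate the local ring at $v_0\in\overline{V_i}$ by a DVR $R$ whose generic point lands in $V_i$, take the $R$-flat closure $\overline Z$ of the generic fiber inside $Z_R$, and use that $\overline Z_0$ is a closed subscheme of $Z_{v_0}\otimes_{\kappa(v_0)}(R/\mathfrak m_R)$ to get $p_i=P_{\overline Z_0}\leq P_{Z_{v_0}}$. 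Once the ordering property is in hand, writing the locus as $\bigcup_{i:\,p_i\geq f}\overline{V_i}$ is immediate. The two routes are about equally standard; yours is more self-contained in that it avoids Gotzmann's bound, but at the cost of re-deriving the specialization behavior of Hilbert polynomials from scratch, whereas the paper outsources that step to the semicontinuity of Hilbert functions, which is the more elementary fact. One small point of care you correctly anticipate: the DVR may introduce a residue field extension, but Hilbert polynomials are invariant under base field extension, so the comparison $P_{\overline Z_0}\leq P_{(Z_R)_0}=P_{Z_{v_0}}$ goes through.
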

\begin{proof}
This is a standard fact but we include a short proof here for completeness.

Fix some Hilbert polynomial $p(t)$ on $\PP^r$.  The Gotzmann number provides a bound $t_p$ such that, for any projective subscheme $Z'\subseteq \PP^r$ with Hilbert polynomial $p(t)$, the Hilbert function and Hilbert polynomial of $Z$ in all degrees $\geq t_p$ (see e.g.~\cite[Chapter~4.3]{bruns-herzog-book}).

We may choose a flattening stratification for $\pi$, i.e. we may write $V$ as a finite disjoint union $V=\sqcup_{i=1}^s V_i$ such that the induced maps from $Z\times_{\PP^r\times V} V_i \to V_i$ are all flat.  Since the Hilbert polynomial is constant in a flat family~\cite[Theorem III.9.5]{hartshorne}, we see that only $s$ distinct Hilbert polynomials appear among the fibers of $\pi$.  We set $t_0$ to be the maximum of all of the Gotzmann numbers of these Hilbert polynomials.  Then for all $t\geq t_0$ and for all $[S]\in H^N$, the Hilbert polynomial of $X_{m,k}^S$ equals the Hilbert function in degrees $t\geq t_0$. 

We next observe that insisting that the Hilbert function be at least a certain value is a closed condition.  Hence for any $f(t)\in \mathbb Q[t]$, the set of fibers whose Hilbert polynomial is at least $f(t)$ is an intersection of closed subschemes, and is thus a closed subscheme.
\end{proof}

In the present paper, we use Proposition~\ref{prop:hilb poly semicontinuity} only through its easy corollary below.  We include Proposition~\ref{prop:hilb poly semicontinuity} because we believe the more general formulation may be useful in later applications of the techniques introduced in this paper.

\begin{cor}
\label{co:grclosed}
Let $\mathbf S\subseteq \mathbb A^n\times \mathbb A^1$ be a flat family of $0$-dimensional schemes over $\mathbb A^1$.  Write $S_t$ for the fiber of $\mathbf S$ over $t\in \mathbb A^1$.  If $X^{S_t}_{m,k}=\Gr(k,n)$ for all $t\ne 0$, then $X^{S_0}_{m,k}=\Gr(k,n)$.
\end{cor}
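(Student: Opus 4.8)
The plan is to deduce Corollary~\ref{co:grclosed} from the semicontinuity statement Proposition~\ref{prop:hilb poly semicontinuity} by packaging the schemes $X^{S_t}_{m,k}$ into a single closed family over $\A^1$. Since $\mathbf S\to \A^1$ is finite and flat, the integer $N:=|S_t|$ is constant, and $E:=\pi_*\cO_{\mathbf S}$ is a rank-$N$ vector bundle on $\A^1$ which is a module over $\cO_{\A^1}[x_1,\dots,x_n]$ with fiber $\cO_{S_t}$ over $t\in\A^1$. I would then repeat the construction of $\overline\Phi$ from \eqref{eqn:Phibar} in this relative setting: composing the tautological inclusion $\mathcal S\hookrightarrow \cO_{\Gr}\otimes W$, $W=\langle x_1,\dots,x_n\rangle$, with the action of $W$ on $E$ yields a map of vector bundles
\[
\Psi\colon \mathcal S\boxtimes E\longrightarrow \cO_{\Gr}\boxtimes E
\]
on $\Gr(k,n)\times \A^1$, of ranks $(n-k)N$ and $N$. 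Let $Z\subseteq \Gr(k,n)\times\A^1\subseteq \PP^{\binom{n}{k}-1}\times\A^1$ be the closed subscheme defined by the vanishing of the $(N-m+1)\times(N-m+1)$ minors of $\Psi$.

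The crux is to verify that the scheme-theoretic fiber $Z_t$ is precisely $X^{S_t}_{m,k}$. This holds because the formation of the ideal of minors of a map of vector bundles commutes with base change: the ideal sheaf of $(N-m+1)$-minors is the image of a natural map out of $\bigwedge^{N-m+1}(\mathcal S\boxtimes E)\otimes\bigwedge^{N-m+1}(\cO_{\Gr}\boxtimes E)^{*}$, and pullback is right exact, so restricting $\Psi$ to the slice $\Gr(k,n)\times\{t\}$ recovers the map $\overline\Phi$ attached to $S_t$ together with its ideal of minors, which by definition cuts out $X^{S_t}_{m,k}$. I expect this base-change compatibility to be the only step that is not pure bookkeeping; once it is in hand the rest is formal. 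Applying Proposition~\ref{prop:hilb poly semicontinuity} to $Z$, with $V=\A^1$, with $\PP^r$ the Pl\"ucker space, and with $f(t)$ the Hilbert polynomial of $\Gr(k,n)$, the hypothesis $Z_t=X^{S_t}_{m,k}=\Gr(k,n)$ for $t\ne 0$ shows that the set of $v\in\A^1$ for which the Hilbert polynomial of $Z_v$ is at least $f$ contains the dense open subset $\A^1\setminus\{0\}$; being closed, it contains $0$, so the Hilbert polynomial of $X^{S_0}_{m,k}$ is at least $f$.

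Finally I would upgrade this inequality of Hilbert polynomials to the scheme-theoretic equality $X^{S_0}_{m,k}=\Gr(k,n)$. Since $X^{S_0}_{m,k}$ is a closed subscheme of $\Gr(k,n)$ its Hilbert polynomial is also at most $f$, so the two polynomials agree. Now the Pl\"ucker homogeneous coordinate ring $A$ of $\Gr(k,n)$ is a domain; if the defining ideal $J\subseteq A$ of $X^{S_0}_{m,k}$ were nonzero, a nonzero $g\in J$ of degree $d$ would be a nonzerodivisor, and the exact sequence $0\to A(-d)\xrightarrow{\,g\,}A\to A/(g)\to 0$ would force the Hilbert polynomial of $A/(g)$, hence that of the further quotient $A/J$, to have degree $\dim\Gr(k,n)-1<\deg f$, contradicting the equality just established. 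Therefore $J=0$ and $X^{S_0}_{m,k}=\Gr(k,n)$, as claimed.
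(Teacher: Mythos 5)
Your proposal is correct and follows essentially the same route as the paper: package the schemes $X^{S_t}_{m,k}$ into a closed family over $\A^1$, invoke the Hilbert-polynomial semicontinuity of Proposition~\ref{prop:hilb poly semicontinuity}, and use that $\Gr(k,n)$ has the maximal Hilbert polynomial among its closed subschemes. The only cosmetic difference is that you build the relative map $\Psi$ and its minor ideal directly from $E=\pi_*\cO_{\mathbf S}$, whereas the paper leaves the family implicit (it can be obtained by pulling back $Y_{m,k}$ along the classifying map $\A^1\to H^N$); you also spell out the base-change compatibility of the minor ideal and the ``maximal Hilbert polynomial'' step, both of which the paper takes for granted.
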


\begin{proof} $\Gr(k,n)$ has maximal Hilbert polynomial among closed subschemes of $\Gr(k,n)$, so the subscheme $W$ of $\A^1$ parametrizing those $t$ such that $X^{S_t}_{m,k}=\Gr(k,n)$ is closed, by Proposition~\ref{prop:hilb poly semicontinuity}, but $W$ is dense by hypothesis, so $W$ is all of $\A^1$.
\end{proof}

\section{Criteria for $X_{m,k}^S=\Gr(k,n)$}\label{sec:equality}
One boundary case that will feature prominently in the proofs of both Theorem~\ref{th:main} and Theorem~\ref{thm: new main} is the case where $X_{m,k}^S=\Gr(k,n)$ as schemes, or equivalently when all $k$-planes (even those defined over field extensions of $\kk$) are $m$-rich for $S$.  This is impossible for a reduced $0$-dimensional scheme, but it can happen when $S$ is non-reduced.  

For instance, if $S$ is the fat point defined by $(x_1,\dots,x_n)^{d+1}$ then every $k$-plane will be $m:=\binom{d+k}{k}$-rich.  Observe that, in this case, $|S|=\binom{d+n}{n}\approx m^{n/k}$.  This suggests the following result, which gives a similar lower bound on $|S|$ whenever $X_{m,k}^S=\Gr(k,n)$.

\begin{prop}\label{prop:Xm equals Gr}
Suppose that $X_{m,k}^S = \Gr(k,n)$. Then there is a constant $C$ depending only on $n$ and $k$ such that $|S|\geq Cm^{n/k}$.  More precisely, if $m\geq \binom{b}{k}$ then $|S|\geq \binom{b+(n-k)}{n}$.

\end{prop}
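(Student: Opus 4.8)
The plan is to exploit the hypothesis $X_{m,k}^S=\Gr(k,n)$ by passing to a yet more degenerate scheme for which the combinatorics becomes transparent. First I would Gröbner-degenerate $S$ — which we may assume is supported at the origin and cut out by a homogeneous ideal $I_S$ with $\dim_\kk \kk[x_1,\dots,x_n]/I_S = N = |S|$ — to its initial ideal $\operatorname{in}(I_S)$ with respect to a suitable term order, and then further to a Borel-fixed (strongly stable) monomial ideal $J$ with the same Hilbert function, hence the same colength $N$. The point of this reduction is Corollary~\ref{co:grclosed}: Gröbner degeneration is realized by a flat family over $\A^1$, and since $X^{S_t}_{m,k}=\Gr(k,n)$ for $t\neq 0$ (all fibers away from $0$ are isomorphic to $S$), semicontinuity forces $X^{S_0}_{m,k}=\Gr(k,n)$ as well. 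So it suffices to prove the colength bound $|S|\geq\binom{b+(n-k)}{n}$ when $S=\Spec \kk[x_1,\dots,x_n]/J$ for $J$ a Borel-fixed monomial ideal, under the assumption that every $k$-plane is $m$-rich.

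Second, having reduced to a Borel-fixed $J$, I would test the richness hypothesis against a single, cleverly chosen coordinate $k$-plane rather than a general one. The natural candidate is the plane $V$ defined by $x_{k+1}=x_{k+2}=\cdots=x_n=0$. Because $J$ is a monomial ideal, $I_S + (x_{k+1},\dots,x_n)$ is again a monomial ideal, and $|S\cap V|$ is the number of standard monomials of $J$ lying in $\kk[x_1,\dots,x_k]$. The hypothesis gives $|S\cap V|\geq m\geq\binom{b}{k}$, so there are at least $\binom{b}{k}$ monomials in the variables $x_1,\dots,x_k$ that are not in $J$. The number of monomials of degree $<b$ in $k$ variables is exactly $\binom{b-1+k}{k}$, so once $m\geq\binom{b}{k}$ one can locate a standard monomial $x_1^{a_1}\cdots x_k^{a_k}$ with $a_1+\cdots+a_k\geq b$ (one has to choose $b$ so the count $\binom{b}{k}$ is the right threshold; this is the bookkeeping behind the "more precisely" clause). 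The key structural input is then that for a Borel-fixed ideal, if a monomial $\mu\in\kk[x_1,\dots,x_k]$ of degree $d:=\deg\mu$ is standard, then every monomial $\mu\cdot x_1^{c_1}\cdots x_n^{c_n}$ — indeed enough of them — is also standard: strong stability lets one push exponents "down" from higher-indexed to lower-indexed variables without leaving the complement of $J$, so starting from one standard monomial of degree $\geq b$ supported on $x_1,\dots,x_k$ I can build a large explicit family of standard monomials using all $n$ variables. Counting that family gives $N=|S|\geq\binom{b+(n-k)}{n}$.

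The main obstacle I anticipate is the second step's combinatorial core: producing, from the single datum "$m\geq\binom bk$ standard monomials in $k$ variables," a guaranteed standard monomial of degree at least $b$, and then parlaying Borel-fixedness into the clean binomial count $\binom{b+(n-k)}{n}$. Getting the thresholds to line up exactly — rather than merely up to a constant — requires being careful about whether one counts monomials of degree $<b$, $\leq b$, or $=b$ in $k$ versus $n$ variables, and about the direction in which strong stability lets exponents migrate. An alternative, slightly softer route that avoids choosing the optimal $b$ is to argue purely asymptotically: the slice in $k$ variables has $\gtrsim m$ standard monomials, so it contains a monomial of degree $\gtrsim m^{1/k}$, and Borel-fixedness then yields on the order of $(m^{1/k})^{n}\gtrsim m^{n/k}$ standard monomials overall, giving $|S|\geq Cm^{n/k}$ with $C$ depending only on $n,k$. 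This already suffices for Theorem~\ref{th:main}; the sharp form $|S|\geq\binom{b+(n-k)}{n}$ is then the same argument with the constants tracked honestly.
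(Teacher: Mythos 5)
Your reduction to a Borel‑fixed monomial ideal via Gr\"obner degeneration and Corollary~\ref{co:grclosed} is exactly the paper's first step, and is correct. The trouble is in your second step, where you test against a single coordinate $k$‑plane and then try to ``lift'' standard monomials to more variables; both the choice of plane and the lifting mechanism run the wrong way.

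First, the direction of Borel moves. For a Borel‑fixed ideal $J$ the standard monomials $\Lambda$ are closed under replacing $x_j$ by $x_i$ for $i<j$ --- exponents migrate \emph{toward lower indices}. A standard monomial $\mu = x_1^{a_1}\cdots x_k^{a_k}$ already has all of its exponents on the smallest indices, so Borel moves applied to $\mu$ produce nothing new. The alternative you suggest, that $\mu\cdot x_1^{c_1}\cdots x_n^{c_n}$ is standard, is false: $J$ is an ideal, so multiplying a standard monomial by a variable in general lands you \emph{inside} $J$, not outside it. (For $J=(x_2^2,\,x_1x_2,\,x_1^{10})\subset \kk[x_1,x_2]$, which is Borel‑fixed, $x_1^9$ is standard but $x_1^9 x_2\in J$.) So from ``many standard monomials in $\kk[x_1,\dots,x_k]$'' you cannot manufacture any standard monomial involving $x_{k+1},\dots,x_n$, and the count $\gtrsim m^{n/k}$ does not follow.

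Second, the plane you chose is the wrong bottleneck. For a Borel‑fixed ideal the plane $\{x_{k+1}=\cdots=x_n=0\}$ is the \emph{richest} direction (precisely because exponents want to drift to $x_1$), so $|S\cap V|\geq m$ there carries essentially no information. The useful test plane is the poorest one, $\{x_1=\cdots=x_{n-k}=0\}$, whose standard monomials live in the variables $x_{n-k+1},\dots,x_n$ and \emph{can} be Borel‑moved downward. The paper uses exactly this direction in the hyperplane case ($\Lambda_0$ = monomials with $x_1$‑exponent zero). Even after correcting the direction, though, a single high‑degree monomial propagated by Borel moves does not cleanly give $\binom{b+(n-k)}{n}$: the paper instead proves the hyperplane case by a sharper counting lemma (Lemma~\ref{lem:frontier}/Corollary~\ref{co:frontier}, bounding $|\Lambda|-|\Lambda_0|$ via a ``frontier'' induction on the number of variables), and then reduces general $k$ to $k=n-1$ by iterating: restricting $S$ to every $(k+1)$‑plane $W$ and observing $X_{m,k}^{S\cap W}=\Gr(k,W)$ gives $|S\cap W|\geq\binom{b+1}{k+1}$, whence $X^S_{m',k+1}=\Gr(k+1,n)$ with $m'=\binom{b+1}{k+1}$, and so on. That iteration, rather than a single‑plane count, is what delivers the precise binomial bound.
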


Our proof of Proposition~\ref{prop:Xm equals Gr} relies on a further degeneration to a Borel fixed scheme, and this is most easily defined over an infinite field.  Note, that the hypotheses and conclusions of the above proposition are unchanged under field extension, and so we may prove this proposition after extending the field $\kk$.  Over a field $\kk$, we let $B\subseteq \GL_n(\kk)$ be the Borel subgroup consisting of invertible upper triangular matrices, and we let $B$ act on $\kk[x_1,\dots,x_n]$ in the natural way.  When $\kk$ is infinite, then we say that a subscheme $Z\subseteq \mathbb A^n$ is \defi{Borel fixed} if $Z$ is invariant under the action of $B$.

\begin{remark}\label{rmk:borel degeneration}
Under the assumption that $\kk$ is infinite, we can degenerate any subscheme to a Borel fixed subscheme via the following recipe.
Consider a subscheme $Z$ defined by the ideal $J\subseteq \kk[x_1,\dots,x_n]$.  Fix a term order $\preceq$ satisfying $x_1\preceq x_{2}\preceq \cdots \preceq x_n$.  We choose a general element of $B$ (this is where we use the assumption the $\kk$ is infinite), apply that element to $J$, and then take the initial ideal with respect to $\preceq$ to obtain a new ideal $J'$.    The subscheme $Z'\subseteq \mathbb A^n$ defined by $Z'$ will be Borel fixed~\cite[Theorem 15.20]{eisenbud-book}.

In addition, the monomials not in $J'$ will be closed under the operation (called a {\em Borel move}) of replacing $x_j$ with $x_i$ for $i < j$.  We thus define a {\em Borel-fixed set of monomials} as a collection of monomials satisfying this property, and where the complementary set of monomials is closed under multiplication by each $x_i$.  See~\cite[Section~15.9]{eisenbud-book} for an introduction to Borel-fixed ideals.
\end{remark}

\begin{lemma}\label{lem:frontier}
Let $\Lambda$ be a Borel-fixed set of monomials in $x_1, \ldots, x_n$ such that $|\Lambda| \geq {a \choose n}$, and let $\Lambda_0$ be the subset of $\Lambda$ in which the power of $x_1$ is $0$.  Then
\beq
|\Lambda| - |\Lambda_0| \geq {a -1 \choose n}.
\eeq
\end{lemma}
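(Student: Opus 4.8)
The plan is to partition $\Lambda$ according to the exponent of $x_1$. Write $\Lambda = \bigsqcup_{j \geq 0} \Lambda_j$, where $\Lambda_j$ is the set of monomials $\mu \in \Lambda$ with $x_1$-exponent exactly $j$; thus $\Lambda_0$ is as in the statement, and $|\Lambda| - |\Lambda_0| = \sum_{j \geq 1} |\Lambda_j|$. First I would set up a "division by $x_1$" map: for $\mu \in \Lambda_j$ with $j \geq 1$, the monomial $\mu/x_1$ has $x_1$-exponent $j-1$ and still lies in $\Lambda$ (since $\Lambda$ is the complement of a monomial ideal, it is closed under dividing by variables). This gives injections $\Lambda_j \hookrightarrow \Lambda_{j-1}$, so the sequence $|\Lambda_0| \geq |\Lambda_1| \geq |\Lambda_2| \geq \cdots$ is non-increasing.

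Next I would use the Borel-fixed hypothesis to compare $|\Lambda|$ and $|\Lambda_0|$ directly. The key observation is that $\Lambda_0$, viewed as a set of monomials in $x_2, \ldots, x_n$, is itself Borel-fixed (the Borel moves among $x_2, \ldots, x_n$ preserve it, since replacing $x_j$ by $x_i$ with $2 \leq i < j$ cannot introduce an $x_1$, and the complement among $x_2,\dots,x_n$-monomials is still closed under multiplication by $x_i$). So I can try to induct on $n$. The real content is a counting inequality: given that $|\Lambda| \geq \binom{a}{n}$, I want $|\Lambda_0| \leq |\Lambda| - \binom{a-1}{n}$, equivalently $\sum_{j\geq 1}|\Lambda_j| \geq \binom{a-1}{n}$.

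The cleanest route, I expect, is to exploit an even stronger consequence of the Borel property: the "last-variable" Borel move lets us replace $x_1$ by $x_1$ trivially, but the move replacing $x_j$ ($j\geq 2$) by $x_1$ shows that if $\mu \in \Lambda$ then multiplying $\mu$ by $x_1$ and dividing by any variable $x_j$ occurring in $\mu$ keeps us in $\Lambda$. Iterating, one sees that for any $\mu\in\Lambda$ of total degree $d$, all $d$ of the monomials $x_1^i \cdot(\text{bottom part of }\mu)$ obtained by sliding exponents down onto $x_1$ lie in $\Lambda$; this is exactly the statement that $\Lambda$, restricted to each "staircase column", is downward closed in the $x_1$-direction and that columns with larger degree contribute more. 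I would make this precise by the standard lex/Borel argument identifying the minimal such $\Lambda$ with the lex-segment of size $\binom{a}{n}$, for which the inequality $|\Lambda| - |\Lambda_0| \geq \binom{a-1}{n}$ becomes the Pascal identity $\binom{a}{n} - \binom{a-1}{n-1} = \binom{a-1}{n}$ — here $\binom{a-1}{n-1}$ is the size of the $x_1$-free part of the lex segment. The point is that among all Borel-fixed $\Lambda$ of a given size, the quantity $|\Lambda|-|\Lambda_0|$ is minimized by the lex segment, by a compression/shifting argument pushing exponents off of $x_1$.

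The main obstacle is this last minimization step: justifying that the lex-segment is extremal for the functional $|\Lambda| \mapsto |\Lambda_0|$ among Borel-fixed sets of fixed cardinality. I would handle it by a compression argument — repeatedly apply the operation that, for each fixed choice of exponents on $x_2,\dots,x_n$, replaces the set of allowed $x_1$-exponents by an initial segment of the same size (this preserves Borel-fixedness after also compressing in the other variables, and is essentially the "universal Gröbner degeneration to lex" / Macaulay's theorem machinery). Since compression does not change $|\Lambda|$ and only decreases (or preserves) $|\Lambda_0|$ relative to $|\Lambda|$... wait, I should be careful about the direction: compression toward lex can only increase the number of monomials divisible by $x_1$, hence decrease $|\Lambda_0|$, so the worst case (largest $|\Lambda_0|$) before compression is bounded by... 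Actually the inequality we want, $|\Lambda|-|\Lambda_0|\ge\binom{a-1}{n}$, says $|\Lambda_0|$ is not too big, so we want to bound $|\Lambda_0|$ above; the lex segment should be checked to be the maximizer of $|\Lambda_0|$ — but lex puts as much as possible into $\Lambda_0$ (the first $\binom{a-1}{n-1}$ monomials in $x_2,\dots,x_n$ alone, extended). So indeed the lex segment maximizes $|\Lambda_0|$ and the bound follows. Alternatively, and perhaps more cleanly, I would avoid extremal set theory entirely and argue by induction on $n$ using the column decomposition: writing $\Lambda = \bigsqcup_j x_1^j \Lambda^{(j)}$ where $\Lambda^{(0)} \supseteq \Lambda^{(1)} \supseteq \cdots$ is a nested chain of Borel-fixed sets in $x_2,\dots,x_n$ (nestedness is the division map above, Borel-fixedness of each $\Lambda^{(j)}$ needs a short check), one reduces the inequality to a sum over $j$ of the inductive hypothesis in $n-1$ variables, together with the elementary estimate $\sum_j \binom{a_j}{n-1} \geq \binom{a}{n}$ when $\sum_j \binom{a_j - ?}{\cdot}$... — here the bookkeeping of how the $\binom{a}{n}$ lower bound on $|\Lambda|$ distributes among the columns is the delicate point, and I would resolve it by choosing the $a_j$ greedily and invoking the hockey-stick identity $\sum_{j\geq 0}\binom{a-1-j}{n-1} = \binom{a}{n}$.
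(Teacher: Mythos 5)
Your general framework is the right one — decompose $\Lambda$ by the $x_1$-exponent, observe that the slices are Borel-fixed in $x_2,\dots,x_n$ and nested, and induct on $n$ — and this is in fact how the paper proceeds. But both routes you sketch have genuine gaps that stop short of a proof.

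Your first route (compression / lex extremality) has a misidentification of the extremal configuration. Among Borel-fixed $\Lambda$ of fixed cardinality, the case where $|\Lambda|-|\Lambda_0|=\binom{a-1}{n}$ is attained is the \emph{degree truncation} $\Lambda=\{$all monomials of degree $\le d\}$ with $|\Lambda|=\binom{n+d}{n}$, $|\Lambda_0|=\binom{n-1+d}{n-1}$; this is not a lex segment. Already for $n=2$, $|\Lambda|=3$: the two Borel-fixed $\Lambda$ are $\{1,x_1,x_2\}$ (the degree cut, $|\Lambda|-|\Lambda_0|=1$, extremal) and $\{1,x_1,x_1^2\}$ ($|\Lambda|-|\Lambda_0|=2$), and the lex segment is not the one that maximizes $|\Lambda_0|$. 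You also flag that you are unsure whether compression pushes $|\Lambda_0|$ up or down; that hesitation is warranted, and the compression step is never carried out. So this route is not viable as written.

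Your second route is the paper's route, but you have explicitly left the crucial step as a placeholder ("$\binom{a_j-?}{\cdot}$", "the delicate point"). What you are missing is the mechanism that controls the \emph{rate of decay} of the nested chain $\Lambda^{(0)}\supseteq\Lambda^{(1)}\supseteq\cdots$. The Borel condition in the direction $x_i\mapsto x_1$ ($i\ge 2$) shows that if $m\in\Lambda^{(j)}$ and $mx_i\in\Lambda^{(j)}$ for some $i\ge 2$, then $m\in\Lambda^{(j+1)}$; hence $\Lambda^{(j)}\setminus\Lambda^{(j+1)}$ lies on the \emph{frontier} of $\Lambda^{(j)}$ (monomials $m$ with $mx_i\notin\Lambda^{(j)}$ for every $i\ge 2$). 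No two frontier elements differ by a power of $x_2$, so the frontier has cardinality at most $|\Lambda^{(j)}_{x_2=0}|$, the $x_2$-free slice. Now the inductive hypothesis in $n-1$ variables, applied to $\Lambda^{(j)}$, bounds $|\Lambda^{(j)}|-|\Lambda^{(j)}_{x_2=0}|$ from below, and chaining these estimates gives $|\Lambda^{(j)}|\ge\binom{b-j}{n-1}$ whenever $|\Lambda^{(0)}|\ge\binom{b}{n-1}$; the hockey-stick sum then yields $\sum_{j\ge1}|\Lambda^{(j)}|\ge\binom{b}{n}$. Finally one splits into the two cases $|\Lambda_0|\le\binom{a-1}{n-1}$ (direct subtraction) and $|\Lambda_0|\ge\binom{a-1}{n-1}$ (apply the chained estimate with $b=a-1$). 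Without the frontier observation and the iterated use of the inductive hypothesis on each slice, the decay of the column sizes is not controlled and the hockey-stick identity has nothing to attach to.
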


\begin{proof}
We argue by induction on $n$.  For $n=1$ the assertion is clear; $|\Lambda| - |\Lambda_0| = |\Lambda| -1 \geq a-1$.

Now we suppose the lemma holds in $n-1$ variables.  We denote by $\Lambda_k$ the set of monomials $m$ in $x_2, \ldots, x_n$ such that $x_1^k m$ lies in $\Lambda$.  (In particular, the definition of $\Lambda_0$ conforms with our existing notation.)  We note that $\Lambda_k$ is a Borel-fixed set of monomials, so we can apply our inductive hypothesis.  Plainly, $\Lambda_{k+1} \subset \Lambda_k$.   

Let $m$ be a monomial in $\Lambda_k$ and suppose $mx_i$ lies in $\Lambda_k$ for some $i \in 2,\ldots, n$.  Then $x_1^{k+1}m$ must also lie in $\Lambda$, since it differs from $x_1^k m x_i \in \Lambda$ by a Borel move.  In particular, $m$ lies in $\Lambda_{k+1}$.  Thus, any element in $\Lambda_k \bs \Lambda_{k+1}$ must lie on the {\em frontier} of $\Lambda_k$; that is, $mx_i$ is not in $\Lambda_k$ for any $i \in 2,\ldots,n$.

Suppose $|\Lambda_0| \geq {b \choose n-1}$.  Let $\Lambda_{00}$ be the set of monomials in $\Lambda_0$ in which the power of $x_2$ is $0$.  No two elements on the frontier of $\Lambda_0$ can differ by a power of $x_2$; it follows that the cardinality of the frontier is at most $|\Lambda_{00}|$.  Combining this with the argument in the previous paragraph, we have
\beq
|\Lambda_1| = |\Lambda_0| - |\Lambda_0 \bs \Lambda_1| \geq |\Lambda_0| - |\Lambda_{00}| \geq {b-1 \choose n-1}
\eeq
where the latter inequality follows by applying the inductive hypothesis to $\Lambda_0$.  Proceeding by induction, we have that $|\Lambda_k| \geq {b-k \choose n-1}$.  Finally,
\begin{equation}
|\Lambda| - |\Lambda_0| = \sum_{k=1}^\infty |\Lambda_k| \geq \sum_{k=1}^\infty {b-k \choose n-1} = {b \choose n}.
\label{ll0}
\end{equation}

We can now prove the theorem.  We have that $|\Lambda| \geq {a \choose n}$.  If $|\Lambda_0| \leq {a-1 \choose n-1}$, then
\beq
|\Lambda| - |\Lambda_0| \geq {a \choose n} - {a-1 \choose n-1} = {a-1 \choose n}
\eeq
and we are done.
On the other hand, if $|\Lambda_0| \geq {a-1 \choose n-1}$, then \eqref{ll0} yields
\beq
|\Lambda| - |\Lambda_0| \geq {a-1 \choose n}
\eeq
So the desired conclusion holds in either case.

\end{proof}

\begin{cor}
\label{co:frontier}
Let $\Lambda$ be a Borel-fixed set of monomials in $x_1, \ldots, x_n$, and let $\Lambda_0$ be the subset of $\Lambda$ in which the power of $x_1$ is $0$, and suppose $|\Lambda_0| \geq {b \choose n-1}$.  Then
\beq
|\Lambda| - |\Lambda_0| \geq {b \choose n}.
\eeq
\end{cor}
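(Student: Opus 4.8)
The plan is to observe that Corollary~\ref{co:frontier} is, essentially verbatim, an intermediate conclusion already reached inside the proof of Lemma~\ref{lem:frontier}. Indeed, the chain of inequalities displayed as \eqref{ll0},
\[
|\Lambda| - |\Lambda_0| = \sum_{k=1}^\infty |\Lambda_k| \ \geq\ \sum_{k=1}^\infty \binom{b-k}{n-1} \ =\ \binom{b}{n},
\]
was derived there under the single hypothesis $|\Lambda_0| \geq \binom{b}{n-1}$, which is exactly the hypothesis of the corollary. So the shortest route is simply to point at that portion of the proof of Lemma~\ref{lem:frontier}. The only bookkeeping point worth checking is that the derivation of the per-slice bound $|\Lambda_k| \geq \binom{b-k}{n-1}$ uses only $|\Lambda_0| \geq \binom{b}{n-1}$ --- via the frontier estimate $|\Lambda_k \setminus \Lambda_{k+1}| \leq |\Lambda_{k,0}|$ together with an application of Lemma~\ref{lem:frontier} in $n-1$ variables to the Borel-fixed set $\Lambda_k$, feeding an induction on $k$ --- and never invokes the hypothesis $|\Lambda| \geq \binom{a}{n}$ from the statement of Lemma~\ref{lem:frontier}; inspection confirms this.

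If instead one wants a self-contained argument, I would re-run the computation directly. Decompose $\Lambda = \bigsqcup_{k\ge 0} x_1^k\,\Lambda_k$, where $\Lambda_k = \{\, m \in \kk[x_2,\dots,x_n] : x_1^k m \in \Lambda \,\}$ is a Borel-fixed set of monomials in the $n-1$ variables $x_2,\dots,x_n$, with $\Lambda_{k+1} \subseteq \Lambda_k$. First show by induction on $k$ that $|\Lambda_k| \geq \binom{b-k}{n-1}$: the base case $k=0$ is the hypothesis, and in the inductive step the elements of $\Lambda_k \setminus \Lambda_{k+1}$ lie on the frontier of $\Lambda_k$ (adjoining any $x_i$, $i\ge 2$, would produce an element of $\Lambda$ differing from $x_1^{k+1}m$ by a Borel move), the frontier injects into the slice $\Lambda_{k,0}$ of $x_2$-degree $0$, and Lemma~\ref{lem:frontier} applied in $n-1$ variables to $\Lambda_k$ gives $|\Lambda_k| - |\Lambda_{k,0}| \geq \binom{b-k-1}{n-1}$, hence $|\Lambda_{k+1}| \geq \binom{b-k-1}{n-1}$. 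Then sum over $k \geq 1$ and apply the hockey-stick identity $\sum_{j=n-1}^{b-1}\binom{j}{n-1} = \binom{b}{n}$, noting the sum is really finite since $\binom{b-k}{n-1}$ vanishes once $b-k < n-1$.

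There is no genuine obstacle here: all the combinatorial substance lives in Lemma~\ref{lem:frontier}, and the corollary merely isolates a step of its proof. The only thing to be careful about is index bookkeeping --- keeping the running parameter $b-k$ in the range where $\binom{b-k}{n-1}$ behaves as expected, so that the telescoping/hockey-stick sum cleanly returns $\binom{b}{n}$.
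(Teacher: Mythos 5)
Your proposal is correct and matches the paper's own proof, which is literally the one-line remark that the corollary is ``immediate from \eqref{ll0} and the paragraph preceding it.'' Your identification that the chain of inequalities leading to \eqref{ll0} depends only on the hypothesis $|\Lambda_0| \geq \binom{b}{n-1}$ (together with Lemma~\ref{lem:frontier} already established in $n-1$ variables, so there is no circularity), and never on $|\Lambda| \geq \binom{a}{n}$, is exactly the observation the paper is implicitly relying on, and your self-contained re-run of the induction on $k$ with the hockey-stick summation is a faithful expansion of that same argument.
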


\begin{proof}
Immediate from \eqref{ll0} and the paragraph preceding it.
\end{proof}

\begin{proof}[Proof of Proposition~\ref{prop:Xm equals Gr}]
Without loss of generality, we may assume that $\kk$ is an algebraically closed field.

We first prove the statement in the special case $k=n-1$.  Suppose $X_{m,n-1}^S = \Gr(n-1,n)$.  Let $S_{\initial}$ be the $0$-dimensional subscheme defined by a Borel-fixed degeneration of the defining ideal of $S$, as in Remark~\ref{rmk:borel degeneration}.   By \cite[Theorem~15.17]{eisenbud-book}, there is a flat family over $\A^1$ where the fiber over $0\in \A^1$ is $S_{\initial}$ and every other fiber is isomorphic to $S$ via an isomorphism that extends to a linear automorphism of $\A^n$.  Write $S_z$ for the fiber over a point $z \in \A^1$.  By flatness, $|S_{\initial}| = |S|$.  The locus of $z$ such that $X_{m,n-1}^{S_{z}}=\Gr(n-1,n)$ contains all $t \neq 0$, by the isomorphism between $S_z$ and $S$.  By Corollary~\ref{co:grclosed}, that locus must contain $0$ as well.  In other words, $X_{m,n-1}^{S_{\initial}}=\Gr(n-1,n)$.  So it suffices to prove Proposition~\ref{prop:Xm equals Gr} in the case of a Borel-fixed subscheme.

Let $S_{\initial}$ be defined by the Borel-fixed monomial ideal $J$ and let $\Lambda$ be the set of standard monomials for $J$, i.e. the monomials that do not lie in $J$.  Let $\Lambda_0\subseteq \Lambda$ be the set of standard monomials in the variables $x_2,\dots,x_n$.  Since $\Lambda_0$ is a basis for $\kk'[x_1, \ldots, x_n] / (J , x_1)$, we have that $|\Lambda_0|$ is the degree of the intersection of $S_{\initial}$ with the hyperplane $x_1 = 0$, whence $|\Lambda_0| \geq m$ by hypothesis.  In fact, though we won't need this, it is not hard to see that for a Borel-fixed $S_{\initial}$, the hyperplane $x_1$ has the minimal intersection with $S_{\initial}$ among all hyperplanes, so that $X_{m,n-1}^S=\Gr(n-1,n)$ is equivalent to $|\Lambda_0| \geq m$.

Now suppose $|\Lambda_0| \geq {b \choose n-1}$.  Then  
\beq
|\Lambda| =  |\Lambda_0| + (|\Lambda| - |\Lambda_0|) \geq {b \choose n-1} + {b \choose n} = {b+1 \choose n}
\eeq
where the inequality is Corollary~\ref{co:frontier}.  This proves Proposition~\ref{prop:Xm equals Gr} in the case $k = n-1$.

We now consider the general case.  Assume that $X_{m,k}^S=\Gr(k,n)$.  We fix some $(k+1)$-plane $V$ through the origin.  By hypothesis, $|S \cap V'|\geq m$ for every $k$-plane $V'$ through the origin; in particular, $|S \cap V'|\geq m$ for all $V'$ contained in $V$.  It follows that $X_{m,k}^{S \cap V}$ is the full Grassmannian $\Gr(k,V)$.   It follows from the $k=n-1$ case of Proposition~\ref{prop:Xm equals Gr} that $|S \cap V| \geq {b+1 \choose k+1}$.  This holds for every $(k+1)$-plane $V$ through the origin.  In particular, taking $m' = {b+1 \choose k+1}$, we have that $X_{m',k+1}^S = \Gr(k+1,n)$.  Iterating this argument yields the desired result.

\end{proof}

\section{Proof of Theorem~\ref{thm: new main}}\label{sec:proof of main theorem}
\begin{proof}[Proof of Theorem~\ref{thm: new main}]
For part (1) of the theorem, we assume that $X^S_{m,k}=\Gr(k,n)$.  We then apply Proposition~\ref{prop:Xm equals Gr} to obtain the theorem.

We now assume that $X^S_{m,k}\ne \Gr(k,n)$.  Then one of the $(|S|- m +1)\times (|S| - m +1)$-minors defining $J_{X_m}$ is nonzero, and Corollary~\ref{cor:local Xm symbolic} implies that
\[
J_{X_m} \subseteq I_{X_{2m}}^{(m+1)}.
\]

By \cite[Theorem~1.1(c)]{hochster-huneke}, if we let $\mathfrak m$ be the irrelevant ideal for the homogeneous coordinate ring $R$ of the Grassmanian, then we see that 
\[
\mathfrak m^{n+1} I_{X_{2m}}^{(m+1)} \subseteq {I_{X_{2m}}}^{\lfloor\frac{m+1}{n}\rfloor}.
\]
Since $J_{X_m}$ is generated in degree $C(|S| - m +1)$ by Lemma~\ref{lem:degree defining equations}, it follows that $\mathfrak m^{n+1}J_{X_m}$ is generated in degree $C(|S|-m+1)+n+1$.  Thus ${I_{X_{2m}}}$ must have some generators of degree at most $\frac{C(|S|-m+1)+n+1}{\lfloor\frac{m+1}{n}\rfloor}$.  

Now, if $m+1 <n$ then we can simply choose $C_2 = n$ and part (2) is trivial.  Otherwise, we can complete the proof of part (2) of the theorem by providing a constant $C_2$ depending only on $n$ and $k$ such that
\[
\frac{C(|S|-m+1)+n+1}{\lfloor\frac{m+1}{n}\rfloor}\leq C_2 \frac{|S|}{m},
\]
noting that the expression on the left is well defined because the denominator is $>0$.  
This yields part (2) of the theorem.
\end{proof}

\section{Proof of $k$-plane Furstenberg bound}\label{sec: proof of furstenberg}
\begin{proof}[Proof of Theorem~\ref{th:main}]
We first prove the theorem in the case $k=n-1$.  We apply Theorem~\ref{thm: new main}, setting $m:=\frac{q^c}{2}$.  If $X_{m,n-1}^S=\Gr(n-1,n)$ then we are done by part (1) of Theorem~\ref{thm: new main}.   Otherwise, part (2) of Theorem~\ref{thm: new main} implies that $X_{2m,n-1}^S$ lies in a hypersurface of degree at most $C_2\frac{|S|}{m}$.  However, since $X_{2m,n-1}^S$ contains all $\F_q$-rational points of $\Gr(n-1,n)$, any such hypersurface must have degree at least $q+1$. It follows that
\[
q+1 \leq C_2\frac{|S|}{m}.
\]
Since $m=\frac{q^c}{2}$ we obtain $|S|\geq C_2(q+1)(\frac{q^c}{2})\geq C_2q^{1+c}$.  Since $c\in [0,n-1]$, we have $1+c\geq cn/(n-1)$ and hence $|S|\geq C_3q^{cn/(n-1)}$ for all $q\gg 0$.

We can obtain the case of general $k$ by an iterative argument exactly parallel to the one in the proof of Proposition~\ref{prop:Xm equals Gr}.  Suppose that $S$ is a $0$-dimensional subscheme of $\A^n/\F_q$.  Without loss of generality we replace $S$ by its dilation, so we may suppose it is supported at $0$ and invariant under $\mathbb{G}_m$.

Assume that $S$ has an $m$-rich $k$-plane in every direction.  Since $S$ is supported at the origin, this is to say that $|S \cap V| \geq m$ for every $\F_q$-rational $k$-plane through the origin.  Fix some some $(k+1)$-plane $V$ through the origin.  Then $|S \cap V'|\geq m$ for all $V'$ contained in $V$.  Now the proof given above of Theorem~\ref{th:main} in the case $k=n-1$ implies that $|S \cap V| \gtrsim m^{(k+1)/k}$, and this holds for every $(k+1)$-plane $V$.  Iterating the argument for $k+2, k+3,\dots, n-1$, we get Theorem~\ref{th:main}.
\end{proof}

\begin{remark}
If you trace the constants with a bit more care, you can obtain the following more precise lower bound, at least asymptotically in $q$.  Fix any $\epsilon>0$.  Assume that $|S\cap V|\geq \frac{q^c}{k!}$ for every $k$-plane $V\in \Gr(k,n)(\F_q)$.  Then $|S|\geq (1-\epsilon) \frac{q^{cn/k}}{n!}$ for $q\gg_\epsilon 1$.  

The key point is that $\frac{q^c}{k!}\geq \binom{\lfloor q^{c/k}\rfloor}{k}$ and hence by iteratively applying Proposition~\ref{prop:Xm equals Gr}, we get that the intersection of $S$ with every hyperplane is at least $\binom{\lfloor q^{c/k}\rfloor}{n-1}$.  Let $m=\binom{\lfloor q^{c/k}\rfloor}{n-1}$.

If $X_{m,n-1}^S = \Gr(n-1,n)$ then we apply Proposition~\ref{prop:Xm equals Gr} again to obtain
\[
|S|\geq \binom{\lfloor q^{c/k}\rfloor}{n}
\]
which grows like $ \frac{q^{cn/k}}{n!}$ as $q\to \infty$, and hence is greater than $(1-\epsilon) \frac{q^{cn/k}}{n!}$ for $q \gg_\epsilon 1$.  On the other hand, for $X_{m,n-1}^S\ne \Gr(n-1,n)$, since $X_{m,n-1}^S$ contains all of the $\F_q$-points, the minimal degree of a hypersurface containing $X_{m,n-1}^S$ is at least $q$, and part (2) of Theorem~\ref{thm: new main} yields
\[
q\leq \frac{|S|-m+n+2}{\lfloor\frac{m+1}{n}\rfloor}.
\]
Using the fact that $m=\binom{\lfloor q^{c/k}\rfloor}{n-1}$ and $q^{1+c(n-1)/k}\geq q^{cn/k}$, we get the desired bound in this case as well.
\end{remark}

\section{Relation with the $k$-plane restriction conjecture}\label{sec:restriction}

One may ask how far the methods of the present paper go towards proving the $k$-plane restriction conjecture formulated in \cite{eot}, or even an extension of that conjecture to a possibly non-reduced setting as in Theorem~\ref{th:main}.  One immediate obstacle is that the most natural extension of the restriction conjecture is false, even when $k=1$, as we explain below.

The restriction conjecture concerns a certain maximal operator on real-valued functions $f$ on $\F_q^n$.  Namely:  we define a function $T_{n,k}$ on $\Gr(k,n)$ by assigning to a $k$-plane direction $\omega$ the supremum, over all $k$-planes $V$ parallel to $\omega$, of $\sum_{v \in V} |f(v)|$.  Then the restriction conjecture proposes a bound for this operator:
\begin{equation}
\label{restriction}
||T_{n,k} f||_n \ll |Gr(k,n)(\F_q)|^{1/n} ||f||_{n/k}.
\end{equation}

One way to express this conjecture more geometrically is as follows.  The bound is invariant under scaling $f$, so we can scale $f$ up until replacing $f$ with a nearby integer-valued function modifies the norm negligibly.  Then we define the scheme $S_f$ to be the union, over all $x \in \F_q^n$, of a fat point of degree $\lfloor f(x)^{1/k} \rfloor$ supported at $x$.

Thus,
\beq
|S_f| \sim \sum_x f(x)^{n/k} = ||f||_{n/k}^{n/k}
\eeq
and
\beq
|S_f \cap V| = \sum_{v \in V} f(v)
\eeq
so we can express $T_{n,k}f(\omega)$ as the supremum of $|S_f \cap V|$ over all planes $V$ parallel to $\omega$.  In other words, both sides of the conjectural inequality \eqref{restriction} are naturally expressed in terms of the geometry of the scheme $S_f$ and its restriction to $k$-planes.  For a general $0$-dimensional subscheme $S \subset \A^n$, we write $T_{n,k}(S)$ for the function on $\Gr(k,n)(\kk)$ defined by
\beq
T_{n,k}(S)(\omega) = \sup_{V || \omega} |S \cap V|.
\eeq
Then we can ask whether we have an inequality
\begin{equation}
\label{grestriction}
||T_{n,k}(S)||_n \lesssim |Gr(k,n)(\F_q)|^{1/n} |S|^{k/n}.
\end{equation}
for all $0$-dimensional $S$; the case $S = S_f$ is more or less equivalent to the $k$-plane restriction conjecture in \cite{eot}.

Unfortunately, \eqref{grestriction} does not hold for all $S$.  For example, take $k=1,n=2$, and let $S$ be the scheme $\Spec \F_q[x,y]/(x,y^N)$.  That is, $S$ is a scheme of degree $N$, supported at the origin, which is contained in the line $x=0$.  Then $T_{2,1}(S)$ is $N$ in the vertical direction and $1$ in all other directions; so $||T_{n,k}(S)||_2$ is $(N^2+q)^{1/2}$, while $|S|^{k/n} = N^{1/2}$.  Then the desired inequality \eqref{grestriction} becomes
\beq
(N^2 + q)^{1/2} \lesssim (q+1)^{1/2} N^{1/2}
\eeq
which holds only when $N \ll q$  

This is in some sense the same issue that arises in Remark~\ref{re:0k}, where our theorem of Furstenberg schemes requires a condition $c \in [0,k]$ which is automatically satisfied for Furstenberg sets.  Something similar appears to be necessary to formulate the correct restriction conjecture for schemes.  For example:  if $S$ is actually of the form $S_f$ and is contained in the line $x=0$, it must be reduced, from which it follows that $|S| < q$.  It is an interesting question whether one can prove \eqref{grestriction} under some geometric conditions on $S$.  Ideally, these conditions would be lenient enough to include the schemes $S_f$ for all real-valued functions $f$.  One natural such question is as follows.

\begin{question} Suppose $S$ is a $0$-dimensional subscheme of $\A^n/\F_q$ which is contained in a complete intersection of $n$ hypersurfaces of degree $Q$.  What upper bounds on the schemes $X_{m,k}^S$ -- say, on their Hilbert functions -- can we obtain in terms of $|S|$ and $Q$? 
\label{q:ci}
\end{question}

Information about Question~\ref{q:ci} would give insight into the case where $f$ was an indicator function of a set $S$, since in that case $S$ is contained in $\A^n(\F_q)$, which is a complete intersection of the hypersurfaces $x_i^q - x_i$ as $i$ ranges from $1$ to $n$.

\section{Examples}\label{sec:examples}
\begin{example}
If $|S| \leq q^{c+\alpha}$ and $c+\alpha\leq cn/k$, then Theorem~\ref{thm: new main} implies that all of the $q^c$-rich $k$-planes of $S$ must lie on a hypersurface of degree $\leq q^{\alpha}$.  For instance, if $|S| \approx q^c$ then all of the $q^c$-rich $k$-planes of $S$ must lie on a hypersurface of bounded degree.
\end{example}

\begin{example}
Let $k=2$ and $n=4$, and let $I$ be the monomial ideal whose quotient ring has basis $\{1,x_1,x_2,x_3,x_4,x_4^2\}$.  Note that $|S|=6$.

The source of $\overline{\Phi}$ is a nontrivial vector bundle, and hence we cannot simply write the map as a simple matrix. We thus consider the open subset of $\Gr(2,4)$ where the Pl\"ucker coordinate $p_{12}$ is nonzero, and here we can write any $2$-plane uniquely as the vanishing set:
\[
\begin{cases}
x_1 + \frac{p_{23}}{p_{12}}x_3 + \frac{p_{24}}{p_{12}}x_4&=0\\
x_2 + \frac{p_{13}}{p_{12}}x_3 + \frac{p_{14}}{p_{12}}x_4&=0\\
\end{cases}
\]
Over this open subset, the map $\overline{\Phi}$ can be written as a matrix
\[
\Phi = 
\bordermatrix{&1&x_1&x_2&x_3&x_4&x_4^2&1&x_1&x_2&x_3&x_4&x_4^2\cr
            1 &0&0&0&0&0&0&0&0&0&0&0&0\cr
             x_1&1&0&0&0&0&0&0&0&0&0&0&0\cr
             x_2&0&0&0&0&0&0&1&0&0&0&0&0\cr
             x_3&\frac{p_{23}}{p_{12}}&0&0&0&0&0&\frac{p_{13}}{p_{12}}&0&0&0&0&0\cr
             x_4&\frac{p_{24}}{p_{12}}&0&0&0&0&0&\frac{p_{14}}{p_{12}}&0&0&0&0&0\cr
             x_4^2&0&0&0&0&\frac{p_{24}}{p_{12}}&0&0&0&0&0&\frac{p_{14}}{p_{12}}&0\cr
             }
\]

Recall that we compute $X_{m,2}^S$ by the $(|S|-m+1)$-minors of $\overline{\Phi}$.  If $m = 3$ then we get $4\times 4$-minors of $\overline{\Phi}$ which are all $0$, and hence $X_{3,2}^S$ contains every point in the open subset $p_{12}\ne 0$ and thus  $X_{3,2}^S=\Gr(2,4)$.  If $m\geq 5$, then $X_{m,2}^S\cap \{p_{12}\ne 0\}=\emptyset$ since the rank of $\overline{\Phi}$ is $2$.  The case $m=4$ is the most interesting, as then $X_{4,2}^S\cap \{p_{12}\ne 0\}$ is defined by the ideal of $3\times 3$ minors of $\overline{\Phi}$.  This yields the ideal
\[
J = \left\langle \frac{p_{24}}{p_{12}},  \frac{p_{14}}{p_{12}}\right\rangle.
\]
Thus, $\Sigma_{4,2}^S\cap \{p_{12}\ne 0\}$ is the set of all $2$-planes of the form
\[
\begin{cases}
x_1 + \frac{p_{23}}{p_{12}}x_3 &=0\\
x_2 + \frac{p_{13}}{p_{12}}x_3 &=0.\\
\end{cases}
\]
\end{example}

\begin{bibdiv}
\begin{biblist}

\bib{bct}{article}{
   author={Bennett, Jonathan},
   author={Carbery, Anthony},
   author={Tao, Terence},
   title={On the multilinear restriction and Kakeya conjectures},
   journal={Acta Math.},
   volume={196},
   date={2006},
   number={2},
   pages={261--302},
}

\bib{bbc}{article}{
	author={Bennett, Jonathan},
	author={Bez,Neal},
	author={Carbery,Anthony},
	title={Heat-flow monotonicity related to the Hausdorff - Young inequality},
	journal={Bulletin of the London Mathematical Society},
	pages={bdp073},
	date={2009},
}

\bib{bruns-herzog-book}{book}{
   author={Bruns, Winfried},
   author={Herzog, J{\"u}rgen},
   title={Cohen-Macaulay rings},
   series={Cambridge Studies in Advanced Mathematics},
   volume={39},
   publisher={Cambridge University Press, Cambridge},
   date={1993},
   pages={xii+403},
}

\bib{dvir:kakeya}{article}{
   author={Dvir, Zeev},
   title={On the size of Kakeya sets in finite fields},
   journal={J. Amer. Math. Soc.},
   volume={22},
   date={2009},
   number={4},
   pages={1093--1097},
}

\bib{ein-lazarsfeld-smith}{article}{
   author={Ein, Lawrence},
   author={Lazarsfeld, Robert},
   author={Smith, Karen E.},
   title={Uniform bounds and symbolic powers on smooth varieties},
   journal={Invent. Math.},
   volume={144},
   date={2001},
   number={2},
   pages={241--252},
}

\bib{eisenbud-book}{book}{
   author={Eisenbud, David},
   title={Commutative algebra with a view toward algebraic geometry},
   series={Graduate Texts in Mathematics},
   volume={150},
   publisher={Springer-Verlag, New York},
   date={1995},
   pages={xvi+785},
}

\bib{eot}{article}{
   author={Ellenberg, Jordan S.},
   author={Oberlin, Richard},
   author={Tao, Terence},
   title={The Kakeya set and maximal conjectures for algebraic varieties
   over finite fields},
   journal={Mathematika},
   volume={56},
   date={2010},
   number={1},
   pages={1--25},
}

\bib{haiman-sturmfels}{article}{
   author={Haiman, Mark},
   author={Sturmfels, Bernd},
   title={Multigraded Hilbert schemes},
   journal={J. Algebraic Geom.},
   volume={13},
   date={2004},
   number={4},
   pages={725--769},
}

\bib{hartshorne}{book}{
   author={Hartshorne, Robin},
   title={Algebraic geometry},
   note={Graduate Texts in Mathematics, No. 52},
   publisher={Springer-Verlag, New York-Heidelberg},
   date={1977},
   pages={xvi+496},
}

\bib{hochster-huneke}{article}{
   author={Hochster, Melvin},
   author={Huneke, Craig},
   title={Comparison of symbolic and ordinary powers of ideals},
   journal={Invent. Math.},
   volume={147},
   date={2002},
   number={2},
   pages={349--369},
}

\bib{wolff99}{article}{
   author={Wolff, Thomas},
   title={Recent work connected with the Kakeya problem},
   conference={
      title={Prospects in mathematics},
      address={Princeton, NJ},
      date={1996},
   },
   book={
      publisher={Amer. Math. Soc., Providence, RI},
   },
   date={1999},
   pages={129--162},
}

\bib{zhang}{article}{
author = {Zhang, Ruixiang},
title = {On configurations where the Loomis-Whitney inequality is nearly sharp and applications to the Furstenberg set problem},
journal={Mathematika},
volume={61},
number={1},
date={2015},
pages={145--161},
}

\end{biblist}
\end{bibdiv}
\end{document}